\documentclass[leqno,11pt]{amsart}

\usepackage{amsmath}
\usepackage{graphicx, color}
\usepackage{amscd}
\usepackage{amsfonts}
\usepackage{amssymb}
\usepackage{mathrsfs}
\usepackage{mathtools}
\usepackage{ulem}

\DeclareMathOperator*{\essinf}{ess\,inf}

\textwidth=6in \textheight=8.5in \topmargin=-0.5cm
\oddsidemargin=0.5cm \evensidemargin=0.5cm

\newtheorem{thm}{Theorem}[section]

\newtheorem{lem}[thm]{Lemma}
\newtheorem{prop}[thm]{Proposition}

\newtheorem{rem}{Remark}[section]

\numberwithin{equation}{section}
\newcommand{\ep}{\varepsilon}
\newcommand\be{\begin{equation}}
\newcommand\ee{\end{equation}}
\newcommand\R{\mathbb R}
\newcommand\Z{\mathbb Z}

\newcommand\ue{u_\delta}

\newcommand\fe{f_\delta}

\newcommand\rn{\R^n}
\newcommand\lcal{\mathcal{L}}
\newcommand\ltil{\tilde{\mathcal{L}}}
\newcommand\ld{\mathcal{L}_D}
\newcommand\lnd{\mathcal{L}_{ND}}
\newcommand\lu{\lcal[u]}
\newcommand\ldu{\ld[u]}
\newcommand\lndu{\lnd[u]}

\def\mp{{{\mathcal{M}}^+_{\lambda,\Lambda}}}
\def\mm{{{\mathcal{M}}^-_{\lambda,\Lambda}}}
\def\mpm{{{\mathcal{M}}^\pm_{\lambda,\Lambda}}}

\allowdisplaybreaks
\def\eps{\varepsilon}


\title[]
{The V\'azquez  maximum principle and the Landis conjecture for elliptic PDE with unbounded coefficients}

\author{Boyan Sirakov}
\author{Philippe Souplet}

\begin{document}

\begin{abstract}
We develop a new, unified approach to the following two classical questions on elliptic PDE:
\begin{itemize}
\item the  strong maximum principle for equations with non-Lipschitz nonlinearities,
\item the at most exponential decay of solutions in the whole space or exterior domains.
\end{itemize}
Our results apply to divergence and nondivergence operators with locally unbounded lower-order coefficients, in a number of situations where all previous results required bounded ingredients.
Our approach, which allows for relatively simple and short proofs, is based on a (weak) Harnack inequality with optimal dependence of the constants in the lower-order terms of the equation and the size of the domain, which we establish.
\end{abstract}

\maketitle


\section{Introduction}

\subsection{The setting} Let $\Omega\subseteq\rn$, $n\ge2$, be
an arbitrary domain in which is given a real-valued uniformly elliptic second order operator, either in divergence form
\begin{equation}\label{defdiv}
\ldu:=\mathrm{div}(A(x)Du +  b_1(x)u) + b_2(x){\cdot} Du +c(x) u,
\end{equation}
or in non-divergence form
\begin{equation}\label{defnondiv}
\lndu:=\mathrm{tr}(A(x)D^2u) +  b_1(x)\cdot Du +c(x) u,
\end{equation}
or more generally a fully nonlinear Hamilton-Jacobi-Bellman operator (i.e. a supremum or an infimum of operators as in \eqref{defnondiv}), for instance, an extremal operator of Pucci type
\begin{equation}\label{deffullynonl}
F[u]:=\mpm(D^2u)\pm b(x)|Du| + c(x) u.
\end{equation}
 Let $\lu$ denote any of \eqref{defdiv}--\eqref{deffullynonl}. We always assume that  $A(x)\in L^\infty(\Omega)$ satisfies
\begin{equation}\label{hypA}
\hbox{ there exist $0<\lambda\le \Lambda$ such that }
\lambda I\le A(x)\le \Lambda I,\ \ x\in \Omega; \quad A\in C(\Omega) \hbox{ if } \mathcal{L}=\lnd.
\end{equation}
The lower-order coefficients belong locally to Lebesgue spaces which make possible for weak solutions to satisfy the maximum principle and the Harnack inequality; specifically,
\begin{equation}\label{hypbc}
\hbox{$b,b_1,b_2\in L^q_{\mathrm{loc}}(\Omega)$ for some $q>n$, \ \ $c\in L^{p}_{\mathrm{loc}}(\Omega)$ for some $p>p_0$, where}
\end{equation}
\begin{equation}\label{defp0}
p_0=
\begin{cases}
n/2,&\quad\hbox{if $\mathcal{L}=\ld$}\\
p_E,&\quad\hbox{if $\mathcal{L}=\lnd$ or $F$}\\
\end{cases}
\end{equation}
and $p_E=p_E(n,\lambda, \Lambda)\in (n/2,n)$ is the constant from \cite{E}, \cite[Theorem 1.9]{Ca}.
In the following $\|b\|$ may denote $\|b_1\|$, or $\|b_1\|+\|b_2\|$, depending on the operator we consider. Also, by "(sub-, super-) solution" we mean that: (i) $u\in H^1_{\mathrm{loc}}(\Omega)$ in the case of $\ld$, and $u$ satisfies the (in)equality in the usual Sobolev sense (see  \cite[Chapter 8]{GT}); (ii) $u\in C(\Omega)$ in the case of $\lnd$ or $F$, and $u$ satisfies the (in)equality in the $L^q$-viscosity sense (see \cite{CCKS}).

 We study the following two classical questions.
\begin{itemize}
\item (V\'azquez strong maximum principle) Can the strong maximum principle hold for nonnegative solutions
 of $\lu\le f(u)$ if $f$ is not Lipschitz ?
\item (Landis conjecture) Is it true that solutions of $\lu=0$ in $\rn$ or in an exterior domain cannot decay super-exponentially at infinity ?
\end{itemize}
In spite of the extensive research in recent years, many open problems subsist (details will be given below). For instance, answers are almost entirely unavailable for operators with unbounded coefficients.

 To our knowledge, no connection between these two problems has been observed before. Here we present a new approach which unifies their treatment, and has the following main advantages.
\begin{itemize}
\item It gives answers for operators with (even locally) unbounded lower-order coefficients, in a number of situations where all previous results required bounded ingredients.
\item It extends many of the already available results on the Landis conjecture, even for equations with bounded coefficients; in particular, it proves the Landis conjecture for coercive fully nonlinear equations, a question which was completely open.
\item It treats simultaneously equations in divergence and non-divergence form, and provides rather short proofs.
\end{itemize}
The main tools of our method are the weak and the full Harnack inequalities, with optimal dependence of their constants in the lower-order terms
and the size of the domain, which we establish in Section \ref{secharn}.

\subsection{A V\'azquez type strong maximum principle}

A well-known result by V\'azquez
\cite{V} asserts that any nonnegative classical supersolution of
\be\label{vazeq}
\Delta u \le f(u)
\ee
in a domain $\Omega$ satisfies {\it the strong maximum principle} (SMP), i.e. either $u\equiv0$ or $u>0$ in $\Omega$, provided  $f(0)=0$, $f\ge 0$ is nondecreasing on $(0,\infty)$, and  $f$ satisfies the (sharp) condition:
\be\label{condint}
 \int_0 {(F(s))}^{-1/2}\, ds =\infty, \qquad F(s) =\int_0^s f(t)\, dt.
\ee
If $f$ has at most linear growth at zero, this is the standard SMP, but \eqref{condint} allows for non-Lipschitz nonlinearities, the most important and representative example being
\be\label{reprf}
f(s) = s\,|\!\ln s|^a, \quad a\le 2.
\ee

There has been a huge amount of work on extending the V\'azquez maximum principle to more general operators in \eqref{vazeq} and weak solutions, with extensions to quasilinear, fully nonlinear, singular, degenerate elliptic operators, see for instance \cite{PSZ}, \cite{PS}, \cite{FMQ}, \cite{FQS}, \cite{PR}, and the references therein. We refer to the book \cite{PS} for a very thorough presentation of this important extension of the SMP. Among many other things, Pucci and Serrin showed that it is sufficient that $f$ be nondecreasing only in a right neighborhood of zero (this type of extension is sometimes referred to in the literature as the Pucci-Serrin maximum principle). They showed also that condition \eqref{condint} is necessary for the strong maximum principle.

Almost all proofs of  V\'azquez type SMPs use the classical procedure of first proving a Hopf lemma by solving a radial problem. More specifically, thanks to the boundedness of the coefficients one can write an ODE whose solutions provide subsolutions of the given PDE in an annulus, with non-vanishing normal derivatives on the boundary. Then simple comparison provides the results -- this strategy has been applied  in all above quoted works. An exception is the paper~\cite{J} on {the  pure second order} equation div$(A(x)Du)= f(u)$, where measure-theoretic approach to the Harnack inequality is employed, in the style of \cite{BT}, to get an ODE on the volumes of
super-level sets of the solutions. A priori bounds for supersolutions of general equations with bounded coefficients can be found in \cite{Ko}.

A situation in which the radial/ODE approach does not seem to work is when the equation $\lu\le f(u)$ has {\it unbounded coefficients}, and this case has been completely open up to now. The following theorem settles it for nonlinearities as in \eqref{reprf}.

\begin{thm}\label{vazquez}
Assume \eqref{hypA}-\eqref{defp0}.
Let $u$ be a nonnegative weak supersolution of
\be\label{ineqnu}
\lu \le  f(u)\qquad\mbox{in }\; \Omega,
\ee
where $f$ is  continuous on $[0,\infty)$, $f(0)=0$, and
\be\label{condgrow}
\limsup_{s\to 0} \frac{f(s)}{s\,(\ln s)^2} <\infty.
\ee
If $\mathrm{ess\,inf}_{B}u=0$ for some ball $B\subset\subset\Omega$ then $u\equiv0$ in $\Omega$.
\end{thm}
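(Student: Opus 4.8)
The plan is to deduce the result from the weak Harnack inequality of Section~\ref{secharn}, the decisive point being that the growth condition \eqref{condgrow} lets one absorb $f(u)$ into the zeroth-order coefficient of $\lcal$ without spoiling the Harnack constant. I would begin with two harmless reductions: replacing $f$ by $f^+$ (a supersolution of \eqref{ineqnu} is one with $f$ replaced by $f^+$, and \eqref{condgrow} is preserved), and fixing $\eps_0>0$, $C_0>0$ with $f(s)\le C_0\,s(\ln s)^2$ for $0<s\le\eps_0$. On the region where $0<u\le\eps_0$ one then rewrites \eqref{ineqnu} as $\lcal[u]-c_f(x)\,u\le 0$ with $c_f(x):=f(u(x))/u(x)$, so that $0\le c_f\le C_0(\ln u)^2$. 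Two features make this useful. First, $c_f$ is an admissible zeroth-order coefficient: either because a positive supersolution satisfies $\ln u\in L^k_{\mathrm{loc}}$ for all $k<\infty$ (a consequence of the logarithmic/Moser estimate underlying the weak Harnack inequality), so $(\ln u)^2\in L^p_{\mathrm{loc}}$; or, sidestepping any integrability worry near the zero set of $u$, one truncates: $c_{f,N}:=\min(c_f,N)$ is bounded, and $u$ is a supersolution of $\lcal[u]-c_{f,N}u\le g_N$ with $g_N:=(c_f-c_{f,N})\,u\ge 0$ supported on $\{u\le e^{-c\sqrt N}\}$, whence $\|g_N\|_{L^p}\to 0$ as $N\to\infty$ because $s(\ln s)^2\to 0$ as $s\to 0^+$. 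Second, $c_f\ge 0$, so the positive part of the zeroth-order coefficient of the modified operator is still bounded by $c^+$; since the first-order coefficients are unchanged, the optimal dependence of the weak Harnack constant proved in Section~\ref{secharn} guarantees that it stays under control, in particular uniformly bounded on small balls.

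Next I would use the hypothesis. Let $u^*$ denote the lower semicontinuous precise representative of $u$ (it exists for supersolutions in all of the cases \eqref{defdiv}--\eqref{deffullynonl}); since $u^*$ is lsc, $\overline B\subset\Omega$ is compact and $u^*=u$ a.e., one has $\min_{\overline B}u^*=\essinf_B u=0$, so $u^*(x_0)=0$ for some $x_0\in\overline B\subset\Omega$. As $\rho\mapsto\essinf_{B_\rho(x_0)}u$ is nonincreasing, nonnegative, and tends to $u^*(x_0)=0$ as $\rho\to0^+$, it vanishes identically: $\essinf_{B_\rho(x_0)}u=0$ for all small $\rho>0$. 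Fix such a $\rho=4r$ with $B_{4r}(x_0)\subset\subset\Omega$ and, in addition, with $u<\eps_0$ on $B_{4r}(x_0)$ --- automatic in the continuous cases by $u^*(x_0)=0$, and obtained in the divergence case by working with the bounded supersolution $\min(u,\eps_0)$ (which again satisfies an inequality of the above type, up to a right-hand side of small $L^p$ norm). By the first paragraph, $u$ is then a nonnegative supersolution on $B_{4r}(x_0)$ of a modified operator with admissible coefficients and right-hand side of arbitrarily small $L^p$ norm. The weak Harnack inequality bounds a suitable average of $u^{p_0}$ over $B_{2r}(x_0)$ by $C\bigl(\essinf_{B_r(x_0)}u+\|g_N\|_{L^p(B_{3r}(x_0))}\bigr)$; since the first term is $0$ and the second tends to $0$ with $N$, we conclude $u\equiv 0$ a.e.\ in $B_{2r}(x_0)$.

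The proof finishes with the usual connectedness argument: the set of points possessing a neighborhood on which $u=0$ a.e.\ is open, is nonempty by the previous step, and is relatively closed in $\Omega$ --- for at any of its accumulation points $x_*$ one again has $u^*(x_*)=0$, hence $\essinf_{B_\rho(x_*)}u=0$ for all small $\rho$, and the previous step applies verbatim --- so it coincides with the connected set $\Omega$, and thus $u\equiv 0$ in $\Omega$. I expect the crux to be the first paragraph: showing that $f(u)$ genuinely behaves like an admissible lower-order term. The exponent $2$ in \eqref{condgrow} is exactly the threshold for which $(\ln u)^2$ remains an admissible zeroth-order coefficient (one power of $\ln$ gives softer results and larger powers fail, in keeping with the sharpness of the V\'azquez condition \eqref{condint}), and one must cope with the possibility that $u$ decays extremely fast toward its zero set --- where $(\ln u)^2$ may fail to be locally integrable --- which is precisely why the truncation of $c_f$, and the smallness $s(\ln s)^2\to 0$, enter. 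A secondary technical matter is the passage from ``$\essinf u=0$'' to ``$u$ small on a full ball'' and, in the divergence case, the non-openness of sub-level sets, handled through the precise representative of $u$.
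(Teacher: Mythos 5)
Your overall strategy (absorb $f(u)/u$ into the zeroth-order coefficient and invoke the sharp weak Harnack inequality) is indeed the one the paper uses, but your first paragraph contains a genuine error that conceals the crux of the proof. You assert that because $c_f\ge 0$, ``the positive part of the zeroth-order coefficient of the modified operator is still bounded by $c^+$,'' and conclude that the weak Harnack constant ``stays under control, in particular uniformly bounded on small balls.'' This has the sign backwards. Writing the modified inequality as $\mathcal{M}^-_{\lambda,\Lambda}(D^2u)-b|Du|+(c-c_{f,N})u\le g_N$, what feeds into the weak Harnack constant is the \emph{negative} part $(c-c_{f,N})^-=(c_{f,N}-c)^+$, which is \emph{not} controlled by $c^-$: it grows like $c_{f,N}$, i.e. like $N$ on the truncation. (The positive part of the zeroth coefficient is harmless because it can simply be discarded from the left-hand side; this is exactly the opposite of what bounds the constant.) Consequently the Harnack constant is not uniform in $N$; by Theorem~\ref{sharpharn}/Proposition~\ref{propext} it behaves like $\exp\bigl(C\,\|c_{f,N}\|^{\gamma_p}\bigr)\sim\exp(C\,N^{\gamma_p})$ with $\gamma_p\ge 1/2$.

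This matters because your conclusion rests on the product (Harnack constant)$\times\|g_N\|_{L^p}$ tending to zero. Your estimate $\|g_N\|_{L^p}\lesssim e^{-c\sqrt{N}}N$ is correct (with $c$ tied to the constant $C_0$ in the bound $f(s)\le C_0 s(\ln s)^2$), but the product then behaves like $\exp\bigl(C N^{\gamma_p}-c\sqrt N\bigr)$, which tends to zero only if the universal Harnack constant $C$ happens to be smaller than the problem-dependent constant $c$ — a coincidence you cannot arrange. This is precisely the obstruction that forces the paper to insert the extra rescaling $x\mapsto x_0+rx$: after rescaling, the zeroth-order coefficient acquires a factor $r^2$, so the Harnack exponent becomes $C\,r\sqrt{\|c_{f,N}\|_\infty}\sim C r\sqrt{N}$, and choosing $r$ small (depending only on the universal $C$ and the growth constant, cf.\ the choice $r_1=\min\{r_0/2,(k\bar C)^{-1}\}$ in Step~2 of the paper) wins the race for \emph{every} admissible $f$. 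Your proof never performs this rescaling, and the phrase ``uniformly bounded on small balls'' is doing illegitimate work: the $L^p$-norm of $c_{f,N}$ on a small ball is \emph{not} small, since $c_{f,N}$ is pointwise of order $N$ on a set of positive measure near $x_0$. (Your alternative route via $\ln u\in L^k_{\mathrm{loc}}$ has the same difficulty — even if one could justify that regularity, the weak Harnack constant would still depend on $\|(\ln u)^2\|_{L^p}$ on the ball in question, and one still needs to make it cooperate with the vanishing of $\essinf u$; the paper does not take this route.) With the rescaling restored, your truncation device (a variant of the paper's shift $u\mapsto u+\delta$) does lead to a correct proof; without it, the argument has a genuine gap.
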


To  our knowledge, this is the first result on SMP for  equations with unbounded coefficients and non-{L}ipschitz nonlinearities. In addition, we do not have any condition of monotonicity of $f$ in a right neighborhood of zero.

 In the recent work \cite{NS} on solvability of general non-coercive fully nonlinear equations with quadratic dependence in the gradient, all results had to be restricted to bounded coefficients precisely because of the lack of a SMP of V\'azquez type for the nonlinearity $f(s) = s\,|\!\ln s|$ and lower-order coefficients in $L^q$, $q>n$, which is the natural integrability for the framework of \cite{NS} (see the end of \cite[Section 2]{NS}). So Theorem \ref{vazquez} extends all results from \cite{NS} to unbounded coefficients.

Let us sketch the main point of the proof of Theorem \ref{vazquez}, assuming for simplicity $f(s) = s\,|\!\ln s|^a$, $a\le 2$, and  $u$ continuous.
Since the zeroset of $u$ is closed, it  suffices to show that it is also open.
If $u$ vanishes at some point, say $0$, then, for each small $\delta,r>0$, the rescaled function $v=u(rx)+\delta$
satisfies $v(0)=\delta$ and $v$ is a
positive solution of a linear equation whose main zero order term is of the form $d(x)v$ with $d(x)=r^2f(u(rx))/(u(rx)+\delta)$.
By the (weak) Harnack inequality with sharp dependence on the zero order coeffcient,
we can estimate the integral of a suitable power of $v$ over the unit ball $B_1$ by the quantity
$$N_\delta=v(0)\exp\bigl[C\|d\|_\infty^{1/2}\bigr]=\delta\exp\Bigl\{Cr\bigl\||\!\ln(u)|^{a/2}\sqrt{u/(u+\delta)}\bigr\|_\infty\Bigr\},$$
where the sup norm is taken on $B_2$.
But it can be seen that $\bigl\||\!\ln(u)|^{a/2}\sqrt{u/(u+\delta)}\bigr\|_\infty=O\bigl[\ln^{a/2}(1/\delta)\bigr]$ as $\delta\to 0$.
When $a\le 2$, by choosing $r>0$ small enough, it follows that $N_\delta$ goes to $0$ as $\delta\to 0$;
hence $u$ vanishes in a neighborhood of $0$ and the zeroset of $u$ is open.

It is an open problem whether \eqref{condgrow} can be replaced by \eqref{condint} in Theorem \ref{vazquez}. We remark that \eqref{condint} is a typical ODE hypothesis, while it seems difficult to find an ODE argument in the presence of unbounded coefficients, as we explained above.

\subsection{The Landis conjecture} In \cite{KL}, among many other things, Kondratiev and Landis asked whether a solution of a uniformly elliptic PDE with bounded coefficients in an exterior domain must necessarily be trivial, 
provided that it decays as $|x|\to\infty$ more rapidly than $\exp(-C_0|x|)$ for a sufficiently large constant $C_0$. This property is known as "the Landis conjecture", also in its sharper form where the optimal $C_0$ is sought for, or in a weaker form  brought up by Kenig in \cite{K2}, where the decay to rule out is $\exp(-|x|^{1+\epsilon})$, $\epsilon>0$. Landis conjecture can also refer to entire solutions (i.e. defined in the whole space).

The Landis conjecture has a long history, in particular in the case $n=2$. Meshkov disproved it for complex potentials $c(x)$ and entire complex solutions of $\Delta u + c(x) u=0$, showing the optimal decay to be $\exp(-|x|^{4/3})$. Important quantitative extensions of Meshkov's result, as well as extensions to more general equations, can be found in \cite{BK}, \cite{EK}, \cite{K2}, \cite{DZ}, \cite{B1}. All these works use Carleman type estimates, which do not distinguish between real and complex solutions.

The Landis conjecture is still open for equations with bounded real coefficients, even for entire solutions of $\Delta u + c(x) u=0$; however a lot of work has been done in the last years. In \cite{KSW} Kenig, Silvestre and Wang prove the weak form of the Landis conjecture in $\mathbb{R}^2$, for $\ld$ with bounded coefficients and one of the $b_i=0$, under the hypothesis that $c(x)\le0$; actually they obtain a more precise quantitative bound, saying that within distance one of each point on the sphere $|x|=R$ there is a point at which $|u|$ is at least $\exp(-C_0R (\log R))$. They also prove a bound in $\exp(-C_0R (\log R)^2)$ for solutions in exterior domains of $\mathbb{R}^2$. This paper brought a number of generalizations, see \cite{DKW}, \cite{KW}, \cite{DZ},  and the references therein. All these works are for $n=2$ and equations in divergence form, and make various hypotheses
 on the lower-order coefficients of $\ld$ which in particular imply that $\ld$ or its dual satisfy the maximum principle on bounded subdomains.

Recently, Rossi \cite{R} established sharp versions of the Landis conjecture for general linear non-divergence form operators with bounded ingredients, either for radial coefficients, or for radial solutions, or  under the hypothesis that $\lnd$ satisfies the maximum principle on bounded subdomains and the solution has a sign on the boundary if the latter is not empty. The proof of the non-radial case in \cite{R} relies heavily on the fact that $e^{- M|x|}$ is a subsolution of the operator for sufficiently large $M>0$, 
a property which holds only if the coefficients of the operator are bounded. Variants of some of the results in \cite{R} are obtained among other things in the earlier paper \cite{ABG} via probability techniques, and in the recent  work \cite{LeB} via a duality argument (due to M. Pierre).

The only result on the Landis conjecture in the real case
that does not make some hypothesis on the coefficients which implies the validity of the maximum principle is the very recently posted paper \cite{LM} which settles Kenig's weak form of the Landis conjecture in dimension $2$, for entire solutions of $\Delta u + c(x) u=0$ and a bounded $c(x)$.

In the real coefficients case, unbounded $b_i$ (with $c$ bounded) are considered in \cite{KW}, \cite{DW}, for divergence form operators and $n=2$ only, under the restrictions that  $b_i$ are integrable at infinity, i.e.~belong to $L^q(\mathbb{R}^2)$, $q>2$, and that $|u|$ grows at most like $\exp(C_0|x|^\alpha)$ with $\alpha=1- 2/q\in (0,1)$.
These rather strong hypotheses lead to a different Landis type result with stronger conclusion,
ruling out solutions that decay like $\exp(-C_1|x|^{\alpha+})$.

Our goal here is to prove the Landis conjecture in $\rn$ in any dimension,
for unbounded lower-order coefficients which are only uniformly locally integrable  (and thus bounded coefficients are a very particular case), under the hypothesis that the maximum principle holds in any bounded subdomain. We also consider exterior domains.

Our method is completely different from the previous works, and allows for rather short proofs. It permits us
to treat simultaneously divergence and non-divergence equations; for the latter we do not know of any previous results with unbounded coefficients. We also consider fully nonlinear equations, for which no previous results are available at all. We use  only the sharp form of the weak and full Harnack inequalities together with the comparison principle and the solvability of the Dirichlet problem in bounded domains.

We recall the definition of uniformly local Lebesgue spaces.
If $h\in L^s_{\mathrm{loc}}(\overline{\Omega})$, $1\le s \le\infty$, we say that $h\in L^s_{ul}(\Omega)$ provided the quantity (norm)
\begin{equation}\label{deful}
\|h\|_{L^s_{ul}(\Omega)}:=\sup_{x\in \rn} \|h\|_{L^s(\Omega\cap B_1(x))}
\end{equation}
is finite. The spaces $L^s_{ul}$ have been used for instance in \cite{Ka, GV}.
Note that $L^{s_2}_{ul}(\Omega)\subset L^{s_1}_{ul}(\Omega)$ if $1\le s_1\le s_2\le \infty$, $\Omega\subseteq\rn$.
Also, we call exterior domain any $\Omega$ such that
$B_{r_1}\subset \rn\setminus \Omega\subset B_{r_2}$ for some $r_2>r_1>0$.
We  will not assume any smoothness on $\partial\Omega$.

\begin{thm}\label{landis} Let $\Omega=\rn$ or $\Omega$ be an exterior domain.
Assume \eqref{hypA}, $b, b_1, b_2 \in L^q_{ul}(\Omega)$, $c\in L^p_{ul}(\Omega)$, with $n<q\le\infty$, $p_0<p\le\infty$, and \eqref{defp0}.
Assume also that $\lcal$ satisfies the maximum principle in each bounded subdomain of $\Omega$.
Then there exists a constant $C_0=C_0(n,p,q,\Lambda/\lambda)$ such that if $u$ is a solution of
\begin{equation}\label{BClandis}
\hbox{$\lu=0$ in $\Omega$, \quad with $u\ge 0$ on $\partial \Omega$ or $u\le 0$ on $\partial \Omega$ (if $\partial \Omega$ is not empty),}
\end{equation}
and
\begin{equation}\label{concllandis}
\lim_{|x|\to \infty} e^{C_1|x|}|u(x)|
=0, \quad\hbox{ with } C_1:= C_0\left(1+\|b\|_{L^q_{ul}(\Omega)}^{\frac{1}{1-(n/q)}} + \|c\|_{L^{p}_{ul}(\Omega)}^{\frac{1}{2-(n/p)}}\right),
\end{equation}
then $u\equiv 0$.
\end{thm}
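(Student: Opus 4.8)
The plan is to argue by contradiction, combining an exponential lower bound on the sup of $|u|$ over large spheres (coming from the weak/full Harnack inequality with sharp dependence on the coefficients) with the hypothesized super-exponential decay of $u$. First I would reduce to the case where $u\le 0$ on $\partial\Omega$ and $u\not\equiv 0$; replacing $u$ by $-u$ if necessary and using the maximum principle in bounded subdomains, one gets a nonnegative quantity to play with, but more usefully one works with the subsolution $w:=u^+$ or passes to the equation satisfied by $v:=-u$. Fix a point $x_0$ (in the interior, away from $\partial\Omega$ for the exterior case) where $u(x_0)\ne 0$, say $u(x_0)>0$ after possibly changing sign; the maximum principle forces $u>0$ somewhere arbitrarily far out, or — better — one first shows $u>0$ cannot hold everywhere with the stated decay unless $u\equiv 0$, so WLOG $u$ changes sign or $u>0$ in $\Omega$.

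The core step is the following chain of Harnack estimates at dyadic scales. For $R$ large, consider the annulus $A_R = B_{2R}\setminus B_{R/2}$ (intersected with $\Omega$), rescale by $R$ to work on a fixed annulus, and apply the (weak) Harnack inequality from Section \ref{secharn} to $v=u+\|u\|_{L^\infty(\partial B_{\rho})}$ type barriers, or directly to the nonnegative supersolution obtained by the maximum principle. The rescaled equation has zero-order coefficient $\tilde c(x)=R^2 c(Rx)$ and first-order coefficient $\tilde b(x)=R\, b(Rx)$; crucially, $\|\tilde c\|_{L^p(B_1(y))}\lesssim R^{2-n/p}\|c\|_{L^p_{ul}}$ and $\|\tilde b\|_{L^q(B_1(y))}\lesssim R^{1-n/q}\|b\|_{L^q_{ul}}$ by definition \eqref{deful}, so the Harnack constant over one rescaled annulus is bounded by $\exp\bigl(C(1+R^{1-n/q}\|b\|_{L^q_{ul}}^{\,?}+\ldots)\bigr)$ — and here one must track that the exponent matches $C_1$ exactly, i.e. that the natural power of $\|b\|$ appearing is $\|b\|^{1/(1-n/q)}$ after optimizing, and similarly $\|c\|^{1/(2-n/p)}$ (this is precisely the ``optimal dependence'' claimed in Section \ref{secharn}). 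Chaining Harnack across $\sim R/r_0$ consecutive annuli of unit (post-rescaling) width, one propagates a positive value at a fixed inner sphere out to radius $R$, obtaining
\[
\sup_{|x|=R}|u(x)| \;\ge\; c\, u(x_0)\,\exp\!\bigl(-C_2 R\bigr),
\]
with $C_2$ controlled by the same expression defining $C_1$, up to the universal constant; choosing $C_0$ larger than this implicit constant makes $C_1>C_2$.

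Combining the lower bound $\sup_{|x|=R}|u|\ge c\,u(x_0)e^{-C_2 R}$ with \eqref{concllandis}, which gives $\sup_{|x|=R}|u|=o(e^{-C_1 R})$, forces $u(x_0)e^{(C_1-C_2)R}\to 0$; since $C_1>C_2$ this is impossible unless $u(x_0)=0$, and as $x_0$ was an arbitrary interior point we conclude $u\equiv 0$. For the exterior-domain case one additionally uses the sign condition on $\partial\Omega$ together with the maximum principle to ensure the Harnack chain can be started: if $u\le0$ on $\partial\Omega$ and $u>0$ at some interior point, then $v=u^+$ (or the solution of a suitable Dirichlet problem dominating $u$ from below on a large ball) is a nonnegative subsolution vanishing on $\partial\Omega$, and one runs the full Harnack inequality on $v$; solvability of the Dirichlet problem in bounded domains, which we are allowed to assume, provides the needed comparison function. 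The main obstacle, and the place where all the work of the paper is spent, is the sharp Harnack inequality of Section \ref{secharn}: making sure the constant depends on $\|b\|_{L^q_{ul}}$ and $\|c\|_{L^p_{ul}}$ exactly through the powers $1/(1-n/q)$ and $1/(2-n/p)$, uniformly as the domain size grows, so that a single dyadic chaining yields the linear-in-$R$ exponent with the right constant; once that is in hand, the contradiction argument above is short.
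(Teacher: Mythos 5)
Your plan is to apply the (chained) Harnack inequality directly to $u$, or to $u^+$, to obtain a lower bound $\sup_{|x|=R}|u|\gtrsim e^{-C_2R}$ and contradict the decay. The problem is that $u$ is allowed to change sign, and the only function you can extract from it with a sign is $u^+$ (or $u^-$, $|u|$), which by Kato's inequality is a nonnegative \emph{sub}solution. The weak Harnack inequality \eqref{weakHarnack} --- the one that gives lower bounds --- applies to nonnegative \emph{super}solutions, and the full Harnack \eqref{sharpHarnack} requires an exact nonnegative solution. Adding a constant to $u$ to make it positive does not help either, because it does not produce a super/solution of the same equation once $c\not\equiv0$. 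So the core chaining step of your argument has nothing to chain on when $u$ is sign-changing, which is exactly the case the theorem is about. You do gesture at ``solvability of the Dirichlet problem in bounded domains, which we are allowed to assume, provides the needed comparison function,'' but this is left as a remark and is where all the missing content lives.

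The paper handles this by first constructing, from the assumed maximum principle and Dirichlet solvability, a global \emph{positive} solution $\psi>0$ of $\mathcal{L}[\psi]=0$ in $\Omega$ (Propositions \ref{prop1} and \ref{prop2}, via approximate problems on $\Omega\cap B_j$ with boundary data $1$ on $\partial B_j$ and $0$ on $\partial\Omega$, normalization, and a compactness/diagonal argument). Since $\psi>0$, the sharp Harnack inequality \eqref{sharpHarnack} from Theorem~\ref{sharpharn} applies to $\psi$ directly and gives $\inf_{G_R}\psi\ge e^{-C_1R}$. The decay hypothesis \eqref{concllandis} then yields $u<\delta\psi$ on $\partial B_{R_i}$ for a sequence $R_i\to\infty$, and the maximum principle for $u_+-\delta\psi$ (divergence case) or $(u-\delta\psi)_+$ (nondivergence case, using the convexity/concavity \eqref{conv1}--\eqref{conv2}) on $\Omega\cap B_{R_i}$ gives $u\le\delta\psi$; letting $\delta\to0$ gives $u\le0$. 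Only at this point is Harnack applied to $-u\ge0$, exactly as you do in your final sentence --- but this step is legitimate only after $u\le0$ has been established by comparison with $\psi$. Your proposal skips the construction of $\psi$ and the $\delta\psi$ comparison, which are the genuinely new parts of the proof; without them the Harnack chaining on $u$ has no footing.

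A secondary point: the dyadic rescaling and chaining you describe is essentially the \emph{proof} of Theorem~\ref{sharpharn} (Step~2, the Harnack chain of unit balls), not a separate ingredient in the proof of Theorem~\ref{landis}. Once Theorem~\ref{sharpharn} is available, it is applied once, to $\psi$ and later to $-u$; re-deriving it inside the proof of Theorem~\ref{landis} obscures where the argument actually requires positivity.
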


\begin{rem} \label{remTh2}
(i) We prove this theorem for Hamilton-Jacobi-Bellman operators which include \eqref{deffullynonl} and \eqref{defnondiv} as particular cases (see Section \ref{secprooflandis}).
Also, assumption \eqref{concllandis} can be weakened to
\begin{equation}\label{concllandis2}
\liminf_{R\to \infty}\ e^{C_1R} \sup_{|x|=R} |u(x)| =0.
\end{equation}

(ii)  By definition, $\ld$ satisfies the maximum principle in a domain $G$ if $\ldu\le 0$ in $G$ and $u^-\in H^1_0(G)$ imply $u^-=0$ in $G$. In the non-divergence case $F$ satisfies the maximum principle if $F[u]\le (\ge)\ 0$ in $G$ (resp. $\lnd[u]\le (\ge)\ 0$) and $u\ge(\le)\ 0$  on $\partial G$ implies $u\ge(\le)\ 0$ in $G$, for each $L^q$-viscosity (sub-/super-)solution $u\in C(\overline{G})$.

As for the boundary conditions in \eqref{BClandis} for the exterior domain case,
they are understood in the standard sense. Namely, $u\le 0$ on $\partial\Omega$ means
$(\varphi u)_+\in H^1_0(\Omega)$ for all $0\le\varphi\in C^\infty_0(\rn)$, if $\mathcal{L}=\ld$;
$u\in C(\overline\Omega)$ and $u(x)\le 0$ for all $x\in\partial\Omega$, if $\mathcal{L}=\lnd$ or $F$.

 We also recall that
 by interior De Giorgi-Moser estimates (see \cite[Theorem 8.24]{GT}),
any solution of $\ldu=0$ is (H\"older) continuous in $\Omega$.

\smallskip

(iii) It is classical that the maximum principle is satisfied in a bounded domain by $\lnd$ if $c\le 0$ and by $\ld$ if $c+\mathrm{div}(b_1)\le 0$  in the sense of distributions,
but this condition is of course far from necessary.
 Various more general results are available, see for instance \cite{T2} for divergence form equations, \cite{BNV}, \cite{BR} for linear equations with bounded coefficients, \cite[Prop. 3.4]{S1} for fully nonlinear equations with unbounded coefficients.
It is also well known
that the validity of the maximum principle can be related to the positivity of the
{\it first eigenvalue} of the operator, or to the existence of a strictly positive supersolution. See \cite{Ch}, \cite{BNV}, \cite{QS}, \cite{Ar}.
\end{rem}

Here is the main idea of the proof of Theorem \ref{landis}. Under our assumption that the operator $\mathcal{L}$
satisfies the maximum principle in bounded subdomains, one can first show the existence of a positive solution $\psi$ of $\mathcal{L}[\psi]=0$ in $\Omega$ (cf.~Propositions~\ref{prop1} and \ref{prop2}). Next, from the Harnack inequality with sharp dependence on the size of the domain
(and on the coeffcients), we deduce a precise lower exponential bound on the decay of $\psi$ at infinity. Then, for a general (possibly sign-changing) solution $u$ of $\mathcal{L}[u]=0$ in $\Omega$, if $|u|$ decays faster than $\psi$ at infinity, one may apply the comparison
principle to $\pm u$ and $\delta\psi$ on the intersection of $\Omega$ with a large ball, for each $\delta>0$.
Letting $\delta\to 0$, we conclude that $u$ has to vanish identically.

\medskip

The rest of the article is organized as follows. Section~2 is devoted to the statement and proof of the
sharp, weak and full, Harnack inequalities. Theorems~\ref{vazquez} and \ref{landis} are respectively proved in Sections~3 and 4.
In the Appendix, for the reader's convenience and in order to supply a full quotable source, we provide a proof
of the usual Harnack inequality, under general hypotheses.

\section{On the Harnack inequality}\label{secharn}

We start by recalling the following classical "half-Harnack" inequalities.

\medskip
\noindent{\bf Theorem A.}
{\it Let $\Omega=B_2$. Assume \eqref{hypA}, $b, b_1, b_2 \in L^q(B_2)$, $c, g\in L^p(B_2)$, with $q>n$, $p>p_0$, and \eqref{defp0}. Suppose $\|b\|_{L^q(B_2)}\le 1$, $\|c\|_{L^{p}(B_2)}\le 1$.
\begin{itemize}
\item (weak Harnack inequality)
There exist constants $\epsilon, C_0>0$ depending  only on $n,p,q,$ $\lambda,\Lambda$, such that if
$ u\ge 0$  
satisfies $\lu\le g$ in $B_2$, then
\begin{equation}\label{weakha}\left(\int_{B_{3/2}} u^\epsilon\,dx\right)^{1/\epsilon}\le C_0\left( \inf_{B_1} u + \|g\|_{L^{p}(B_2)}\right).
\end{equation}
\item (local maximum principle)  For each $\varepsilon>0$, there exists a constant $C_\varepsilon>0$ depending only on $n,p,q,\lambda,\Lambda,\varepsilon,$ such that, if $u$ satisfies $\lu\ge g$ in $B_2$, then
\begin{equation}\label{locma}\sup_{B_1} u\le   C_\varepsilon \left(\left(\int_{B_{3/2}} |u|^\varepsilon\,dx\right)^{1/\varepsilon} + \|g\|_{L^{p}(B_2)}\right).\end{equation}
\end{itemize}
}

In this generality, this theorem was proved in \cite{T2} for divergence form operators, and in \cite{KSh}, \cite{KSl} for fully nonlinear operators.

\begin{rem} If $\lcal=\ld$ is in divergence form we can add to the right-hand side of the differential inequality a term div$(h)$, for some $h\in L^q(B_2)$ (note $div(L^q)\subset H^{-1}$ if $q>n$), adding also $\|h\|_{L^{q}(B_2)}$ to the right-hand side of the inequalities in Theorem A. \end{rem}

\begin{rem} In \cite{KSh}, \cite{KSl} the results are actually stated for $c=0$, but extension to arbitrary $c\in L^{p}$, $p>p_0$ is rather straightforward.
Specifically, $F[u]\le g$ and $u\ge0$ imply $\mathcal{M}_{\lambda,\Lambda}^-(D^2u) -b|Du|-c^-u\le g$, and because of the sign $c^-\ge0$ for the latter operator the ABP inequality holds without difference with respect to the case $c=0$. On the other hand $F[u]\ge  g$ implies $\mathcal{M}_{\lambda,\Lambda}^+(D^2u) +b|Du|\ge -cu + g$ and we can treat $cu$ as a right-hand side, through a well-known argument.
Nevertheless, since Theorem A plays a pivotal role in our study, for the reader's convenience and in order to supply a full quotable source, we provide a proof in the appendix. \end{rem}
\smallskip

An essential tool in our analysis are the following  Harnack type inequalities with sharp dependence in the lower-order coefficients and the size of the domain.

 For any $r$ with $n<r\le \infty$, we set
$$
\beta_r= \frac{r}{r-n} = \frac{1}{1-(n/r)}, \qquad  \gamma_r= \frac{r}{2r-n} = \frac{1}{2-(n/r)}, \qquad \beta_\infty=1,\quad\gamma_\infty=\frac{1}{2},
$$
and we denote by $G_R\subset G_R^\prime$, $R>2$, either $G_R=B_R, G_R^\prime=B_{R+1}$ or  $G_R=B_R\setminus B_2, G_R^\prime=B_{R+1}\setminus B_1$.

\begin{thm}\label{sharpharn}
 Let $\Omega=G'_R$ for some $R>2$. Assume \eqref{hypA},  $b, b_1, b_2 \in L^q(G'_R)$, $c, g\in L^p(G'_R)$, with $n<q\le\infty$, $p_0<p\le\infty$, and \eqref{defp0}. Set
\begin{equation}\label{defAR}
A=A_R:= 1+ \|b\|^{\beta_q}_{L^q_{ul}(G_R^\prime)}+\|c\|^{\gamma_p}_{L^p_{ul}(G_R^\prime)}.
\end{equation}
There exist constants $\epsilon, C_0>0$ depending only on $n,p,q,\lambda,\Lambda$, such that  the following holds.
\begin{itemize}
\item (weak Harnack inequality) If $u\ge0$ satisfies $\lu\le g$ in $G_R^\prime$, then
\begin{equation}\label{weakHarnack}
\left(\int_{G_R} u^\epsilon\,dx\right)^{1/\epsilon}\le e^{C_0AR}\left( \inf_{G_R} u + \|g\|_{L^{p}_{ul}(G_R^\prime)}\right).
\end{equation}
\smallskip
\item (local maximum principle) If $u$ satisfies $\lu\ge g$ in $G_R^\prime$, then, for each $\varepsilon>0$,
\begin{equation}\label{weakMP}
\sup_{G_R} u\le  C_\varepsilon\left(A^{n/\varepsilon}\left(\int_{G_R^\prime} |u|^\varepsilon\,dx\right)^{1/\varepsilon} + \|g\|_{L^{p}_{ul}(G_R^\prime)}\right),
\end{equation}
for some constant $C_\varepsilon>0$ depending only on $n,p,q,\lambda,\Lambda,\varepsilon$.
\smallskip
\item (Harnack inequality) If $u\ge0$ satisfies $\lu= g$ in $G_R^\prime$, then
\begin{equation}\label{sharpHarnack}
\sup_{G_R} u\le e^{C_0AR}\left( \inf_{G_R} u + \|g\|_{L^{p}_{ul}(G_R^\prime)}\right).
\end{equation}
\end{itemize}
\end{thm}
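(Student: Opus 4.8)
The plan is to derive all three inequalities from Theorem~A by a covering/chaining argument, tracking carefully how the constants depend on $R$ and on the size of the coefficients. The first step is a \emph{rescaling to unit size}. Cover $G_R$ by a family of balls $\{B_{1/2}(x_i)\}$ with $x_i\in G_R$ and $B_1(x_i)\subset G_R'$; the number of balls is $O(R^n)$, but more importantly any two points of $G_R$ can be joined by a chain of at most $O(R)$ such balls (this uses that $G_R$ is either a ball or an annulus, hence ``long and thin'' only in the radial direction of length $O(R)$, and is connected with a Harnack chain of length comparable to its diameter). On each ball $B_1(x_i)$, after translating and dilating by the fixed factor $2$ to land on $B_2$, the coefficients $b,c$ get multiplied by powers of the (fixed) scaling factor, so the rescaled coefficients have $L^q$, $L^p$ norms on $B_2$ bounded by $C(n)\|b\|_{L^q_{ul}(G_R')}$ and $C(n)\|c\|_{L^p_{ul}(G_R')}$ respectively.

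The second step is the \emph{normalization of the coefficients} so that Theorem~A applies with its hypotheses $\|b\|_{L^q(B_2)}\le 1$, $\|c\|_{L^p(B_2)}\le 1$. Here one performs a further scaling $x\mapsto \rho x$ with $\rho = \rho(b,c)$ chosen small, using that under $x\mapsto\rho x$ the $L^q$ norm of $b$ scales like $\rho^{1-(n/q)}$ and the $L^p$ norm of $c$ like $\rho^{2-(n/p)}$; choosing $\rho\sim A^{-1}$ makes both norms $\le 1$, where $A=A_R$ is exactly the quantity in \eqref{defAR} (the exponents $\beta_q=1/(1-(n/q))$ and $\gamma_p=1/(2-(n/p))$ in the definition of $A$ are precisely the reciprocals of these scaling exponents, which is the reason for that definition). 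The cost of this extra shrinking is that a ball of radius $1$ now needs $O(A^n)$ of the smaller balls to be covered, and a Harnack chain across the rescaled domain has length $O(AR)$ rather than $O(R)$.

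The third step is to \emph{iterate Theorem~A along the chain}. For the weak Harnack inequality \eqref{weakHarnack}: on each small ball the weak Harnack estimate of Theorem~A gives $\bigl(\fint u^\epsilon\bigr)^{1/\epsilon}\le C_0\bigl(\inf u + \|g\|_{L^p}\bigr)$ on the inner ball; combining the reverse-type bound on one ball with the infimum bound on the adjacent one, and multiplying the constant $C_0$ once per link, an $O(AR)$-step chain yields a factor $C_0^{O(AR)}=e^{C_0'AR}$, which is the claimed bound; the $g$-term is controlled uniformly since $\|g\|_{L^p(B_2(\text{rescaled}))}\le C\|g\|_{L^p_{ul}(G_R')}$ and there are only finitely many powers of the scaling factors involved. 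For the local maximum principle \eqref{weakMP}, one applies \eqref{locma} on each small ball and sums the $L^\epsilon$ norms over the $O(A^n R^n)$ pieces; taking the sup over $x_i$ and using $\|u\|_{L^\epsilon(B_i)}\le \|u\|_{L^\epsilon(G_R')}$ absorbs the $R^n$ into the integral (it becomes part of $\int_{G_R'}$), while the $A^n$ that survives from the extra $\rho\sim A^{-1}$ rescaling of each ball's volume is exactly the prefactor $A^{n/\varepsilon}$ in \eqref{weakMP}. Finally \eqref{sharpHarnack} is immediate by combining \eqref{weakHarnack} with \eqref{weakMP}: choose $\varepsilon=\epsilon$, bound $\sup_{G_R}u$ by the right side of \eqref{weakMP}, then bound $\bigl(\int_{G_R'}u^\epsilon\bigr)^{1/\epsilon}$ using \eqref{weakHarnack} applied on the slightly larger domain (replacing $G_R$ by $G_{R+1}$ costs only an extra $O(A)$ in the exponent), and note $A^{n/\epsilon}=e^{O(\log A)}\le e^{C_0'AR}$ since $A\ge 1$ and $R>2$; this produces $\sup_{G_R}u\le e^{C_0''AR}\bigl(\inf_{G_R}u+\|g\|_{L^p_{ul}}\bigr)$.

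The main obstacle I anticipate is the \emph{bookkeeping of the Harnack chain geometry together with the double rescaling}, in particular verifying that the chain length is genuinely $O(AR)$ and not $O(AR^{?})$ with a worse power (this is where connectedness of $G_R$ and the ``annulus or ball'' restriction matter — one should not naively chain through $O(R^n)$ balls), and making sure the $g$-dependent terms, which pick up various powers of the scaling factors at each step, telescope rather than accumulate. A secondary technical point is that in the viscosity/fully nonlinear case one must check that the chaining argument is compatible with the $L^q$-viscosity notion of solution under rescaling, and that Theorem~A (as quoted from \cite{KSh}, \cite{KSl}) is being applied to the correct extremal inequalities with the sign conditions on $c^\pm$ noted in the preceding remark.
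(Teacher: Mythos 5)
Your proposal is correct and matches the paper's own proof in all essentials: a Harnack chain of $O(AR)$ balls of radius $r_0\sim A^{-1}$, a rescaling to $B_2$ on each ball bringing the coefficient norms below $1$ so Theorem~A applies (the paper performs this in a single rescaling step rather than your two-step version, but the exponents $\beta_q,\gamma_p$ are used identically), iteration along the chain yielding the $e^{C_0AR}$ factor, a single-ball rescaling at the sup point for the local maximum principle giving the $A^{n/\varepsilon}$ prefactor, and combination of the two half-Harnack inequalities on slightly enlarged domains for \eqref{sharpHarnack}.
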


\begin{rem} The optimality of the constant in \eqref{weakHarnack},  is obvious from the ODE $u^{\prime\prime} -2bu^{\prime}- cu=0$, $b,c\in \mathbb{R}^+$, with solution $u(x)=e^{Dx}$, $D=b+\sqrt{b^2+c}$.
\end{rem}

\begin{rem} In the particular case when the coefficients of the operator are bounded and the operator is in divergence form, the constant in \eqref{sharpHarnack} appears as a remark without proof after Theorem 8.20 in \cite{GT} (with $G_R=B_R, G_R^\prime=B_{4R}$, $\|g\|_{L^{p}(B_{4R})}$ instead of $\|g\|_{L^{p}_{ul}(G_R^\prime)}$). We note however that a straightforward examination of the constants in the proofs of Theorems 8.17-8.18 in \cite{GT} does not seem to give exactly that dependence in the norms of the coefficients, and a refinement is needed. More specifically, with $R=1$, following the constants in those theorems one gets a Harnack constant which grows like $\nu^{C(n)\sqrt{\nu}}$ as $\nu \to \infty$ and not $C(n)^{\sqrt{\nu}}$ as we have above ($\nu$ in \cite{GT} grows like $\|b\|_\infty+\sqrt{\|c\|_\infty}$ here, see (8.6) in \cite{GT}, their $d$ is our $c$). Note this is in accordance with what we find in other articles which use the same technique, for instance \cite[Remark 2.2]{IKR}, where the quoted constant is also like $\nu^\nu$. We also observe that there is a misprint in \cite[Problem 8.3]{GT}, the correct statement of that Problem  is with $\sqrt{\Lambda/\lambda} + \nu R$, and not $\sqrt{\Lambda/\lambda + \nu R}$ --  see the previous remark. Finally, in the non-divergence case, an examination of the constants in Safonov's proof of the weak Harnack inequality in \cite{Saf} yields an exponential in which the $L^\infty$-norms of the coefficients are taken to some (possibly large) power. \end{rem}

\begin{rem} We obtain the dependence in $R$ in Theorem \ref{sharpharn} not from a rescaling $x\to x/R$ but from a "Harnack chain" of balls of  fixed radius, which leads to a Harnack inequality in which $G_R^\prime\setminus G_R$ has in-radius of order $1$ instead of $R$, and whose constants depend on norms of the  coefficients in $L^q_{ul}(G_R^\prime)$ instead of $L^q(G_R^\prime)$ (note even for constant functions the latter norm degenerates as $R\to\infty$ while the former does not).
\end{rem}

\begin{rem} As in Theorem A, if $\lcal=\ld$ is in divergence form we can add to the right-hand side of the differential (in)equalities in Theorem \ref{sharpharn} a term div$(h)$, for some $h\in L^q(G_R^\prime)$, adding also $\|h\|_{L^{q}(G_R^\prime)}$ to the right-hand side of \eqref{weakHarnack}-\eqref{sharpHarnack}. Furthermore, the proof of the local maximum principle below shows that \eqref{weakMP} can be replaced by the
more precise estimate
\begin{equation}\label{weakMPul}
\sup_{G_R} u\le  C_0\Bigl(A^{n/\varepsilon}\|u\|_{L^{\varepsilon}_{ul}(G_R^\prime)} +
A^{(n/p)-2}\|g\|_{L^{p}_{ul}(G_R^\prime)}\Bigr)
\end{equation}
(where $\|\cdot\|_{L^{\varepsilon}_{ul}}$ is still defined by \eqref{deful}, although it  need not be a norm in case $\varepsilon<1$).
 \end{rem}

{\it Proof of Theorem \ref{sharpharn}}.
In all that follows $C_0>0$ depends on $n, p,q,\lambda,\Lambda$, and may change from line to line.
\smallskip

{\bf Step 1.} {\it Weak Harnack inequality in small balls.}
Let $x_0\in\R^n$, $r_0>0$ and set $B=B_{r_0}(x_0)$, $B'=B_{2r_0}(x_0)$.
Assume $b\in L^q(B')$, $c\in L^p(B')$ and
\begin{equation}\label{r0small}
 0<r_0\le \Bigl[2+\|b\|_{L^q(B')}^{\beta_q}+\|c\|_{L^p(B')}^{\gamma_p}\Bigr]^{-1}\in (0,1/2].
\end{equation}
If $u\ge0$ satisfies $\lu\le g$ in $B'$, then we have
$$\left(\int_{B} u^\epsilon\,dx\right)^{1/\epsilon} \le C_0 r_0^{n/\epsilon} \left( \inf_{B} u +  r_0^{2-n/p}\|g\|_{L^{p}(B')}\right).$$

{\it Proof of Step 1.} Let $v(y)=u(x_0+r_0y)$ for $y\in B_2$ (hence $x_0+r_0y\in B'$).
The function $v$ satisfies
$\ltil[v]\le \tilde{g}$
in $B_2$,
where the coefficients of the modified operator $\ltil$ are $\tilde A(y)= A(x_0+r_0 y)$,
$\tilde b(y)=r_0 
b(x_0+r_0 y)$, $\tilde c(x)=r_0^2c(x_0+r_0y)$, and $\tilde{g}(x)=r_0^2g(x_0+r_0y)$. We compute
$$\begin{aligned}
\|\tilde b\|_{L^q(B_2)}
&=r_0\Bigl(\int_{|y|<2} |b(x_0+r_0y)|^q\,dy\Bigr)^{1/q}\\
&= r_0^{1-n/q}\Bigl(\int_{|x-x_0|<2r_0} |b(x)|^q\,dx\Bigr)^{1/q}
 = r_0^{1-n/q}\|b\|_{L^q(B')}
\end{aligned}
$$
and similarly $\|\tilde c\|_{L^p(B_2)} = r_0^{2-n/q}\|c\|_{L^p(B')}$. Hence by {\eqref{r0small},}
$$\|\tilde b\|_{L^q(B_2)}\le 1,\quad \|\tilde c\|_{L^p(B_2)}\le 1.$$
It follows from Theorem~A
that
$$\left(\int_{B_{1}} v^\epsilon\,dy\right)^{1/\epsilon}\le C_0\left( \inf_{B_1} v + \|\tilde g\|_{L^{p}(B_2)}\right),$$ 
which gives the claim of Step 1, by scaling back to $u$ and $g$.
\smallskip

{\bf Step 2.} {\it Proof of the weak Harnack inequality \eqref{weakHarnack}.}
Set $r_0:=(3A)^{-1}$, where $A=A_R$ is defined by \eqref{defAR}.
Set $\tilde G_R:=G_R+B_{r_0}$ and denote by $X_1,\dots,X_m$ the points of the
grid $\bigl(\textstyle\frac{r_0}{2\sqrt{n}}\Z\bigr)^n \cap \tilde G_R$,
whose cardinal satisfies $$m\le C_1(n)\bigl(\textstyle\frac{R}{r_0}\bigr)^n=C_2(n)(AR)^n.$$
Set $\mathcal{B}_i:= B_{r_0}(X_i)$. Observe that the $\mathcal{B}_i$ cover $G_R$ and that $B_{2r_0}(X_i)\subset G_R'$.
It is easy to see that for any $k,\ell\in \{1,\dots,m\}$, we can connect $X_{k}$ and $X_{\ell}$  with overlapping balls as follows:
there exist an integer $$d\le C_3(n)\textstyle\frac{R}{r_0}=C_4(n)AR$$
and indices $\ell_1,\dots,\ell_d\in \{1,\dots,m\}$
such that $\ell_1=k$, $\ell_d=\ell$ and
$$\bigl|\mathcal{B}_{\ell_{i+1}}\cap\mathcal{B}_{\ell_i}\bigr|\ge C_5(n)r_0^n,\quad i=1,\dots,d-1.$$

Since
$$0<r_0=\frac13\Bigl[1+ \|b\|^{\beta_q}_{L^q_{ul}(G_R^\prime)}+\|c\|^{\gamma_p}_{L^p_{ul}(G_R^\prime)}\Bigl]^{-1}
\le \Bigl[2+\|b\|_{L^q(B')}^{\beta_q}+\|c\|_{L^p(B')}^{\gamma_p}\Bigr]^{-1}$$
with $B'=B_{2r_0}(X_{\ell_2})$, we deduce from Step 1 that
$$\left( \int_{\mathcal{B}_{\ell_2}} u^\epsilon\,dx \right)^{1/\epsilon}\le C_0 r_0^{n/\epsilon} \left( \inf_{\mathcal{B}_{\ell_2}} u
+  \|g\|_{L^{p}(B')}\right).$$
On the other hand,
$$
\left( \int_{\mathcal{B}_{\ell_1}} u^\epsilon\,dx \right)^{1/\epsilon}\ge
\left( \int_{\mathcal{B}_{\ell_1}\cap\mathcal{B}_{\ell_2}} u^\epsilon\,dx \right)^{1/\epsilon} \ge (\inf_{\mathcal{B}_{\ell_2}} u ) |\mathcal{B}_{\ell_1}\cap\mathcal{B}_{\ell_2}|^{1/\epsilon}\ge ( C_5(n)r_0^n)^{1/\epsilon} (\inf_{\mathcal{B}_{\ell_2}} u )
$$
hence, by combining the last two inequalities,
$$\left( \int_{\mathcal{B}_{\ell_2}} u^\epsilon\,dx \right)^{1/\epsilon}
\le C_0  \left(\left( \int_{\mathcal{B}_{\ell_1}} u^\epsilon\,dx \right)^{1/\epsilon} +  \|g\|_{L^{p}(B')}\right)
{\le C_0  \left(\left( \int_{\mathcal{B}_{\ell_1}} u^\epsilon\,dx \right)^{1/\epsilon} +  \|g\|_{L^{p}_{ul}(G_R')}\right).}$$
Repeating the process, we obtain
$$\left( \int_{\mathcal{B}_{\ell}} u^\epsilon\,dx \right)^{1/\epsilon}\le C_0^{d-1}
\left(\left( \int_{\mathcal{B}_{k}} u^\epsilon\,dx \right)^{1/\epsilon} +  \|g\|_{L^{p}_{ul}(G_R')}\right).$$
Since the $\mathcal{B}_\ell$ cover $G_R$,
by summing over $\ell\in  \{1,\dots,m\}$ we obtain
$$\left( \int_{G_R} u^\epsilon\,dx \right)^{1/\epsilon}\le
m^{1/\epsilon} C_0^d \left(\left( \int_{\mathcal{B}_{k}} u^\epsilon\,dx \right)^{1/\epsilon} + \|g\|_{L^{p}_{ul}(G_R')}\right).$$
 Recalling $m\le C_2(n)(AR)^n$
 and $d\le C_4(n)AR$, we have $m^{1/\epsilon} C_0^d {\ \le\ } e^{C_0 AR}$ (by readjusting $C_0$ as usual). By using Step 1 again we finally get
$$\left( \int_{G_R} u^\epsilon\,dx \right)^{1/\epsilon}\le  e^{C_0AR}\left( \inf_{\mathcal{B}_{k}} u +  \|g\|_{L^{p}_{ul}(G_R')}\right),\quad k\in  \{1,\dots,m\},$$
which implies \eqref{weakHarnack}
 since the $\mathcal{B}_k$ cover $G_R$.

\smallskip

{\bf Step 3.} {\it Proof of the local maximum principle \eqref{weakMP}.} We take $r_0:=(2A)^{-1}$, where $A=A_R$ is defined by \eqref{defAR},
and choose $x_0\in G_R$ such that
$$\sup_{G_R} u  \le \sup_{B_{r_0}(x_0)} u.$$
By using the same rescaling as in Step 1, combined with the second part of Theorem~A,
we obtain
$$ \sup_{B_1} v \le C_0\left( \left(\int_{B_{3/2}} |v|^\varepsilon\,dy\right)^{1/\varepsilon}+ \|\tilde g\|_{L^{p}(B_2)}\right).$$
By scaling back to $u$ and $g$, we get
$$\sup_{B_{r_0}(x_0)} u
\le C_0  \left(r_0^{-n/\varepsilon} \left(\int_{B_{3r_0/2}(x_0)} |u|^\varepsilon\,dx\right)^{1/\epsilon}  +  r_0^{2-n/p}\|g\|_{L^{p}(B_{2r_0}(x_0))}\right),$$
and \eqref{weakMPul} -- hence in particular \eqref{weakMP} -- follows.
\smallskip

 {\bf Step 4.} {\it  Proof of  \eqref{sharpHarnack}.}  The Harnack inequality \eqref{sharpHarnack} is a combination of the weak Harnack inequality
\eqref{weakHarnack} and the local maximum principle \eqref{weakMP}. Observe that, after rescaling or through a trivial modification of the above steps, we can replace $G_R$ in \eqref{weakHarnack} by $\tilde G_R= B_{R+1/2}$  if $G_R=B_R$ (resp. $\tilde G_R= B_{R+1/2}\setminus B_{3/2}$ if $G_R=B_R\setminus B_2$). Similarly, we can replace $G_R^\prime$ in \eqref{weakMP} by $\tilde G_R$.
\hfill $\Box$

\section{Proof of Theorem \ref{vazquez}}

We start with an elementary technical lemma, which restates the hypothesis of Theorem~\ref{vazquez} in a more convenient form for the proof of that theorem.

\begin{lem} Assume $f:[0,L]\to \R$ is  continuous for some $L>0$, and $f(0)=0$. Then
 \be\label{condgrow1}
\limsup_{s\to 0} \frac{f(s)}{s\,(\ln s)^2} <\infty
\ee
if and only if there exists $k>0$ such that
\be\label{condvazq}
e^{\sqrt{M_\delta}} = o\left(\frac{1}{\delta^k}\right)\, \quad \mbox{as $\delta\to 0$, \quad  where }
M_\delta :=\max_{s\in[0,L]} \frac{f(s)}{s+\delta}.
\ee
\end{lem}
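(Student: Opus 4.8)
The plan is to prove the two implications of the equivalence separately. The implication \eqref{condvazq}$\,\Rightarrow\,$\eqref{condgrow1} is immediate: if $e^{\sqrt{M_\delta}}=o(\delta^{-k})$, then $e^{\sqrt{M_\delta}}\le\delta^{-k}$ for all small $\delta>0$, hence $M_\delta\le k^2(\ln\delta)^2$; since taking $s=\delta$ in the maximum defining $M_\delta$ (legitimate once $\delta\le L$) gives $M_\delta\ge f(\delta)/(2\delta)$, we obtain $f(\delta)\le 2k^2\,\delta(\ln\delta)^2$ for all small $\delta>0$, which is exactly \eqref{condgrow1}, with $\limsup\le 2k^2$.

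For the converse \eqref{condgrow1}$\,\Rightarrow\,$\eqref{condvazq}, I would first fix $C>0$ and $s_0\in(0,\min(L,e^{-2}))$ such that $f(s)\le Cs(\ln s)^2$ for $0<s\le s_0$, which \eqref{condgrow1} permits. Splitting the maximum defining $M_\delta$ over $[s_0,L]$ and $[0,s_0]$, on $[s_0,L]$ we have $f(s)/(s+\delta)\le s_0^{-1}\max_{[s_0,L]}|f|=:C_1$, independent of $\delta$. The crux is the elementary one-variable estimate
\[
\sup_{0<s\le s_0}\frac{s(\ln s)^2}{s+\delta}\ \le\ (\ln\delta)^2,\qquad 0<\delta\le s_0,
\]
which I would prove by splitting at the threshold $s=\delta$: for $\delta\le s\le s_0<1$ one has $s/(s+\delta)\le1$ and $(\ln s)^2\le(\ln\delta)^2$; for $0<s<\delta\le e^{-2}$ one uses that $g(s):=s(\ln s)^2$ is nondecreasing on $(0,e^{-2}]$ (since $g'(s)=\ln s\,(\ln s+2)\ge0$ there), so $s(\ln s)^2\le\delta(\ln\delta)^2$ and hence $s(\ln s)^2/(s+\delta)\le\delta(\ln\delta)^2/\delta=(\ln\delta)^2$. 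Combining the two parts, $M_\delta\le C_1+C(\ln\delta)^2$ for all small $\delta$, whence $\sqrt{M_\delta}\le(\sqrt{C}+1)\,|\ln\delta|$ and $e^{\sqrt{M_\delta}}\le\delta^{-(\sqrt{C}+1)}$ for $\delta$ small; then \eqref{condvazq} holds for any $k>\sqrt{C}+1$, say $k=\sqrt{C}+2$, since $\delta^k e^{\sqrt{M_\delta}}\le\delta\to0$.

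The only point needing care is this elementary estimate: a naive bound $s(\ln s)^2/(s+\delta)\le s(\ln s)^2/\delta=O(1/\delta)$ would be hopelessly weak --- it would give $e^{\sqrt{M_\delta}}=e^{O(\delta^{-1/2})}$ rather than a power of $1/\delta$ --- whereas in fact the supremum is only $O((\ln\delta)^2)$, attained near $s\asymp\delta$, and the threshold splitting combined with the monotonicity of $s\mapsto s(\ln s)^2$ near the origin is exactly what extracts this. Everything else is routine bookkeeping.
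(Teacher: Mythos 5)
Your proof is correct and follows essentially the same route as the paper's: both directions hinge on splitting the supremum at $s=\delta$ and exploiting that $s\mapsto s(\ln s)^2$ is nondecreasing on $(0,e^{-2}]$, yielding $\sup_{0<s\le s_0}\tfrac{s(\ln s)^2}{s+\delta}\le(\ln\delta)^2$, while the easy direction sets $s=\delta$ in the maximum. You are slightly more careful than the paper in restricting the pointwise bound $f(s)\le Cs(\ln s)^2$ to a right neighborhood $(0,s_0]$ of the origin and treating $[s_0,L]$ by continuity of $f$ (the paper loosely asserts the bound on all of $[0,L]$, which can fail if $L\ge1$ since $s(\ln s)^2$ vanishes at $s=1$), but this is a minor tightening, not a different argument.
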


\begin{proof}
Let us check that \eqref{condgrow1} implies \eqref{condvazq} for some $k>0$.
The assumption \eqref{condgrow1} guarantees that
$f(u)\le Cu(\ln u)^2$ on $[0,L]$ for some constant $C>0$.
Assume $\delta<e^{-2}$. Then $$[s(\ln s)^2]'=(\ln s)^2+2\ln s>0\quad\mbox{  on }\;(0,\delta],$$
hence
$$\frac{s(\ln s)^2}{s+\delta}\le \frac{\delta(\ln \delta)^2}{s+\delta}\le (\ln \delta)^2,\quad s\in(0,\delta],$$
whereas
$$\frac{s(\ln s)^2}{s+\delta}\le \max\bigl\{(\ln \delta)^2,(\ln L)^2\bigr\},\quad s\in [\delta,L].$$
Consequently,
$$\sqrt{M_\delta}\le\max_{s\in[0,L]} \sqrt{\frac{Cs(\ln s)^2}{s+\delta}}\le \sqrt{C}(|\ln \delta|+|\ln L|).$$
Choosing any $k>\sqrt{C}$, we conclude that for sufficiently small $\delta>0$ and some $\bar C>0$
$$\delta^ke^{\sqrt{M_\delta}}\le  \bar C \delta^{k-\sqrt{C}}\to 0,\ \hbox{ as } \delta\to 0.$$

Conversely, if \eqref{condvazq} holds, by setting $s=\delta$ in the definition of $M_\delta$ we get
$$
\frac{f(\delta)}{2\delta} \le M_\delta \le C_1 k^2(\log\delta)^2 \quad\mbox{ for all }\;  \delta<L,
$$
which implies \eqref{condgrow1}.
\end{proof}

 For the proof of Theorem \ref{vazquez} we need the following slight extension of Theorem~\ref{sharpharn}.

\begin{prop}\label{propext}
 Let $\Omega=B_2$. Assume \eqref{hypA},
$b, b_1, b_2, h \in L^q(B_2)$, $g\in L^p(B_2)$,
$c=c_1 + c_2$, $c_i\in L^{p_i}(B_2)$, with $n<q\le\infty$ and $p_0<p, p_i\le\infty$, $i=1,2$, and \eqref{defp0}. Set
$$A= 2+\|b\|_{L^q(B_2)}^{\beta_q}+\|c_1\|_{L^{p_1}(B_2)}^{\gamma_{p_1}}+
\|c_2\|_{L^{p_2}(B_2)}^{\gamma_{p_2}}.$$
If $u\ge0$ satisfies $\ldu\le g + \mathrm{div}(h)$, resp. $F[u]\le g$ in $B_2$, then
$$\left(\int_{B_1} u^\epsilon\,dx\right)^{1/\epsilon} \le e^{C_0A}\left( \inf_{B_1} u +  \|g\|_{L^{p}(B_{2})}+\|h\|_{L^{q}(B_{2})}\right).$$
\end{prop}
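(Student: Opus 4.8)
The plan is to follow, almost verbatim, Steps~1--2 of the proof of Theorem~\ref{sharpharn}, run with $G_R=B_1$ and $G_R'=B_2$ --- so that the size parameter $R$ is now just a fixed number --- and to absorb the two new features, the divergence datum $\mathrm{div}(h)$ and the splitting $c=c_1+c_2$, into the small-ball step. Throughout, $C_0>0$ and $c_0\in(0,1)$ would denote constants depending only on $n,p,q,p_1,p_2,\lambda,\Lambda$. First I would establish the analogue of Step~1 of loc.\ cit.: given $x_0$ and $r_0\in(0,1/3]$ with $B:=B_{r_0}(x_0)$, $B':=B_{2r_0}(x_0)\subset B_2$, and
$$r_0\le c_0\Bigl[2+\|b\|_{L^q(B')}^{\beta_q}+\|c_1\|_{L^{p_1}(B')}^{\gamma_{p_1}}+\|c_2\|_{L^{p_2}(B')}^{\gamma_{p_2}}\Bigr]^{-1},$$
rescale $v(y)=u(x_0+r_0y)$ on $B_2$ exactly as there. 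Then $v$ satisfies $\tilde{\mathcal L}_D[v]\le\tilde g+\mathrm{div}(\tilde h)$ (resp. $\tilde F[v]\le\tilde g$, with $h\equiv0$), with $\tilde b=r_0\,b(x_0+r_0\cdot)$, $\tilde c_i=r_0^2\,c_i(x_0+r_0\cdot)$, $\tilde g=r_0^2\,g(x_0+r_0\cdot)$, $\tilde h=r_0\,h(x_0+r_0\cdot)$, and the same scaling computation as in loc.\ cit.\ gives $\|\tilde b\|_{L^q(B_2)}=r_0^{1-(n/q)}\|b\|_{L^q(B')}$, $\|\tilde c_i\|_{L^{p_i}(B_2)}=r_0^{2-(n/p_i)}\|c_i\|_{L^{p_i}(B')}$, $\|\tilde g\|_{L^p(B_2)}=r_0^{2-(n/p)}\|g\|_{L^p(B')}$ and $\|\tilde h\|_{L^q(B_2)}=r_0^{1-(n/q)}\|h\|_{L^q(B')}$. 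Since $q>n$ and $p,p_i>p_0\ge n/2$ make all these exponents positive, for $c_0$ small enough and $r_0\le1$ we get $\|\tilde b\|_{L^q(B_2)}\le1$, $\|\tilde c_i\|_{L^{p_i}(B_2)}\le c_0$, $\|\tilde g\|_{L^p(B_2)}\le\|g\|_{L^p(B')}$, $\|\tilde h\|_{L^q(B_2)}\le\|h\|_{L^q(B')}$.

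Next I would merge the two zero-order pieces. On the bounded domain $B_2$ one has $L^{p_i}(B_2)\hookrightarrow L^{p_*}(B_2)$ and $L^p(B_2)\hookrightarrow L^{p_*}(B_2)$ with $p_*:=\min(p,p_1,p_2)>p_0$ and embedding constants depending only on $n$ (the gap of reciprocal exponents being at most $1/p_0<2/n$), so that $\tilde c:=\tilde c_1+\tilde c_2\in L^{p_*}(B_2)$ with $\|\tilde c\|_{L^{p_*}(B_2)}\le1$ once $c_0$ is small, while $\|\tilde g\|_{L^{p_*}(B_2)}\le C_0\|\tilde g\|_{L^p(B_2)}$. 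Theorem~A, applied with exponent $p_*$ for the zero-order coefficient and the right-hand side, together with the Remark following it to accommodate $\mathrm{div}(\tilde h)$, then gives the weak Harnack inequality for $v$ on $B_1$; scaling back and using $r_0\le1$ once more yields the small-ball estimate
$$\Bigl(\int_{B}u^\epsilon\,dx\Bigr)^{1/\epsilon}\le C_0\,r_0^{n/\epsilon}\Bigl(\inf_{B}u+\|g\|_{L^p(B')}+\|h\|_{L^q(B')}\Bigr).$$
With this in hand I would run the Harnack chain of Step~2 of the proof of Theorem~\ref{sharpharn} without change: with $A$ as in the statement and $r_0:=(C_0A)^{-1}$ chosen so large that $r_0$ satisfies the requirement above for every $B'=B_{2r_0}(x)$ with $x\in B_{1+r_0}$ and that $r_0\le1/3$ (possible since $A\ge2$), cover $B_1$ by the balls $\mathcal B_i=B_{r_0}(X_i)$ centred at the points $X_i\in\bigl(\textstyle\frac{r_0}{2\sqrt n}\Z\bigr)^n\cap B_{1+r_0}$, whose number is $m\le C_0A^n$ and whose doubled balls lie in $B_2$; connect any two of them by a chain of at most $d\le C_0A$ members overlapping in sets of measure $\ge C_0r_0^n$; iterate the small-ball estimate along the chain and sum over the cover to obtain $(\int_{B_1}u^\epsilon\,dx)^{1/\epsilon}\le m^{1/\epsilon}C_0^{d}(\inf_{B_1}u+\|g\|_{L^p(B_2)}+\|h\|_{L^q(B_2)})$, and finish with $m^{1/\epsilon}C_0^{d}\le e^{C_0A}$ (readjusting $C_0$), using that the $\mathcal B_i$ cover $B_1$ to replace $\inf_{\mathcal B_k}u$ by $\inf_{B_1}u$.

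I expect the only non-routine point to be the treatment of the split coefficient $c=c_1+c_2$ with $c_1,c_2$ allowed to lie in different Lebesgue spaces; this is precisely what forces the definition of $A$ used here, in which each part contributes with its own optimal exponent $\gamma_{p_i}$ rather than the worse common $\gamma_{p_*}$ --- a distinction that is essential for the application in Section~3, where $c_1\in L^p$ is the coefficient of $\mathcal L$ itself and $c_2\in L^\infty$ arises from the nonlinearity $f$ and must contribute like $\|c_2\|_{\infty}^{1/2}$. The device that makes this work is to shrink the small-ball radius $r_0$ against $\|c_i\|^{\gamma_{p_i}}$ separately for each $i$ before merging; beyond that, everything is the machinery of Theorem~\ref{sharpharn}, and the $\mathrm{div}(h)$ term costs nothing, being already covered by the Remark after Theorem~A.
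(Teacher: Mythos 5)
Your proposal is correct and follows essentially the same route as the paper: rerun Steps~1--2 of the proof of Theorem~\ref{sharpharn} with $R=1$, absorbing the new data by (i) invoking the Remark after Theorem~A for the $\mathrm{div}(h)$ term and (ii) scaling $c_1$ and $c_2$ separately so each piece contributes with its own exponent $\gamma_{p_i}$ before merging into a common Lebesgue space to apply Theorem~A. The paper's stated proof condenses precisely this into the one-line scaling observation $\|\tilde c\|_{L^{\bar p}(B_2)}\le C(n)\bigl(r^{2-n/p_1}\|c_1\|_{L^{p_1}(B_{2r})}+r^{2-n/p_2}\|c_2\|_{L^{p_2}(B_{2r})}\bigr)$ with $\bar p=\min\{p_1,p_2\}$, which is exactly the device you identified as non-routine.
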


The proof of this proposition is essentially the same as Steps 1 and 2 of the proof of Theorem \ref{sharpharn}
(with $R=1$), noting that if $\tilde c(x) = r^2 c(rx)$ and $\bar p=\min\{p_1,p_2\}$ then
$$
\|\tilde c\|_{L^{\bar p}(B_2)}\le C(n)\left(r^{2-n/p_1}\|c_1\|_{L^{p_1}(B_{2r})} +r^{2-n/p_2}\|c_2\|_{L^{p_2}(B_{2r})}\right).
$$

We can now give the:

 \begin{proof}[Proof of Theorem \ref{vazquez}]
  Assume for contradiction that $u\ge 0$ is a
nontrivial supersolution
such that $\essinf_{B}u=0$ for some ball $B\subset\subset\Omega$.

\smallskip
{\bf Step 1.} In this step we will observe that, up to replacing $\Omega$ by a suitable subdomain $\Omega'$,
we can assume that $u$ is continuous.
If $u$ is a viscosity supersolution, this is so by definition.
Thus consider the case when $u$ is a weak Sobolev supersolution.

We first claim that there exists a ball
$B^\prime\subset\subset\Omega$ such that $\essinf_{B^\prime}u=0$ but the trace of $u$ on $\partial B^\prime$ does not vanish identically.
 Assume the contrary and let
 $$
 E=\Bigl\{a\in\Omega\::\: \essinf_{B_\eps(a)}u=0\ \mbox{ for all } \eps\in (0,\rho(a))\Bigr\},\ \mbox{ where }
  \rho(a)={\rm dist}(a,\partial\Omega).$$
 First note that $E$ is nonempty. Indeed, if $E$ were empty then, for each $a\in \Omega$, there would exist $\sigma(a)\in (0,\rho(a))$ such that
$\essinf_{B_{\sigma(a)}(a)}u>0$.
But since the compact $\overline B\subset\Omega$ can be covered by a finite number of balls $B_{\sigma(a_i)}(a_i)$, this would contradict $\essinf_{B}u=0$.
Next, it is clear that for each $a\in E$ we have $u=0$ a.e.~in $ B_{\rho(a)}(a)$
(since otherwise there would exist $\eta\in(0,\rho(a))$ such that the trace of $u$ on $\partial B_\eta(a)$ does not vanish identically).
It follows that the set $E$ is open.
Let $(a_i)$ be a sequence of $E$ with $a_i\to a\in \Omega$ and set $r=\rho(a)$.
We have $B(a_i,r/2)\subset\subset\Omega$ for $i$ large and $a_i\in E$, hence $u=0$ a.e.~in $B_{r/2}(a_i)$, by what we just proved.
Taking $i$ large enough we deduce that  $u=0$ a.e.~in $B_{r/4}(a)$, hence $a\in E$ and $E$ is closed in $\Omega$.
Consequently, $E=\Omega$ and $u=0$ a.e. in $\Omega$. This contradiction proves the claim.

Since $0$ is a (sub)solution we can find a {\it solution} of $\ld[\tilde u]=f(\tilde u)$ in $B^\prime$, such that $0\le \tilde u\le u$  in $B^\prime$ and $\tilde u = u$ on $\partial B^\prime$. This follows from the  general existence theory, see for instance \cite[Theorem 4.9]{Du}.
Note in that theorem it is assumed that $b_1,b_2\in L^\infty$ but what is used is that the map $u\to \mathrm{div}(b_1u) + b_2(x)\cdot Du$ is continuous from $H^1_0$ to $H^{-1}$, which is true for $b_1,b_2\in L^q$, $q>n$ (and even for $b_1,b_2\in L^n$, $n\ge3$),
owing to the Sobolev embedding and $\frac{1}{n} + \frac{n-2}{2n} + \frac{1}{2} =1$. Also, $0\le \inf_{B^\prime} \tilde u \le \mathrm{ess\,inf}_{B^\prime}u=0$ and the trace of $\tilde u$ on $\partial B^\prime$ does not vanish. Thus,  if we can prove the theorem for continuous solutions, we could apply it
with $\Omega$ replaced by $B^\prime$ and $u$ replaced by $\tilde u$ which is (H\"older) continuous by the De Giorgi-Moser theory, a contradiction.

\smallskip
{\bf Step 2.}
Set $K=\{x\in \Omega;\, u(x)=0\}$. Since $u$ is continuous, the set $K$ is closed in $\Omega$.  It is nonempty by our assumption on the existence of $B$.
Pick any $x_0\in K$ and assume $x_0=0$ without loss of generality. We are going to show that $u$ vanishes in $B_{r_1}\subset \Omega$ for some ${r_1}>0$, from which we deduce that $K$ is open in $\Omega$, so $K=\Omega$, and we are done.

We extend $f(s)=0$ for $s<0$, and set $\ue = u+ \delta$, $\fe (s) = f(s-\delta)$, $\delta\in(0,1)$. Fix $r_0>0$ such that $B_{r_0}\subset\Omega$. Then $\ue>0$ solves either
\be\label{eqep}
\mathrm{div}({ A(x)}D\ue + b_1(x)\ue)+ b_2(x) \cdot D\ue + \left(c(x) -\frac{\fe(\ue)}{\ue}\right) \ue \le {\delta} ( c(x) + \mathrm{div}(b_1))  \; \mbox{ in } B_{r_0}
\ee
or, respectively,
\be\label{eqep1}
\mm(D^2\ue)- b(x)|D\ue| {\ +}  \left(c(x) -\frac{\fe(\ue)}{\ue}\right) \ue \le {\delta}  c(x)
 \; \mbox{ in } B_{r_0}.
\ee

Hence for each $r\in (0,r_0/2]$ the rescaled function $v_r(x) = \ue(rx)$ is such that
\be\label{eqeptil}
\mathrm{div}(\tilde{A} Dv_r + \tilde{b}_1v_r)+ \tilde{b}_2(x) \cdot D v_r + \left(\tilde{c}(x) -r^2\frac{\fe(\ue)}{\ue}\right) v_r\le {\delta} ( \tilde{c}(x) + \mathrm{div}(\tilde{b}_1))  \; \mbox{ in } B_{2}{,}
\ee
and similarly for \eqref{eqep1}, where $\|\tilde{b}\|_{L^{q}(B_{2})}\le N$, $\|\tilde{c}\|_{L^{p}(B_{2})}\le N$, for some constant $N$ independent of $r$ and $\delta$ (see for instance Step 1 in the proof of Theorem \ref{sharpharn}, we can take $N$ to be the largest of $\|{b}\|_{L^{q}(B_{r_0})}$, $\|{c}\|_{L^{p}(B_{r_0})}$).

We now apply Proposition \ref{propext} to \eqref{eqeptil}, with $p_1=p$, $p_2=\infty$, $c_1=\tilde{c}$, $c_2=r^2\frac{\fe(\ue)}{\ue}$. This yields (recall $v_r(0)=u_\delta(0)=\delta$, $M_\delta$ is defined in \eqref{condvazq})
$$
\left( \int_{B_1} v_r^\epsilon\,dx \right)^{1/\epsilon}\le  \exp\left[ \bar C
 \left(1+r\sup_{[\delta, L+\delta]}\sqrt{\frac{\fe(s)}{s}} \right) \right] \left( \ue(0) + \bar C \delta \right)
= \bar C \delta \exp\left[ \bar C
 r\sqrt{M_\delta} \right],
$$
 with $\bar C= \bar C(n,\lambda,\Lambda,N)$. We {next} set $r=r_1:= \min\{r_0/2, (k\bar C)^{-1}\}$, where $k$ is the number from \eqref{condvazq}. Thus
 $$
\left( \int_{B_{r_1}} u^\epsilon\,dx \right)^{1/\epsilon}\le \left( \int_{B_{r_1}} u_\delta^\epsilon\,dx \right)^{1/\epsilon}= r_1^{n/\epsilon}\left( \int_{B_{1}} v_{r_1}^\epsilon\,dx \right)^{1/\epsilon} \le C[\delta^k e^{\sqrt{M_\delta}}]^{1/k}{.}
$$
Letting $\delta\to0$ and using \eqref{condvazq} we deduce $u\equiv0$ in $B_{r_1}$, which is what we wanted to prove.
 \end{proof}

\section{Proof of Theorem \ref{landis}}\label{secprooflandis}

In this section we consider either the divergence form operator
\begin{equation}\label{defdivagain}
\ldu:=\mathrm{div}(A(x)Du +  b_1(x)u) + b_2(x)\cdot Du +c(x) u,
\end{equation}
 where $A$ satisfies \eqref{hypA},
$b_1,b_2\in L^q_{ul}(\Omega)$, $c\in L^p_{ul}(\Omega)$, $q>n$, $p>p_0$ with \eqref{defp0}; or the fully nonlinear operator
\begin{equation}\label{defoff}
F[u]:=F(D^2u,Du,x) + c(x) u
\end{equation}
where $F(M,0,x)$ is continuous in $(M,x)$, $F(M,e,x)$ is convex (or concave) in $(M,e)$,  $F(tM,te,x) = tF(M,e,x)$ for each $t>0$, and
\begin{equation}\label{ellipoff}
\mm(M_1-M_2)-b(x)|e_1-e_2|\le F(M_1,e_1,x)-F(M_2,e_2,x)\le \mp(M_1-M_2) + b(x)|e_1-e_2|,
\end{equation}
$b\in L^q_{ul}(\Omega)$, $q>n$, $c\in L^{p}_{ul}(\Omega)$, $p>p_0$ with \eqref{defp0}. The linear and Pucci operators in \eqref{defnondiv} and \eqref{deffullynonl} are particular cases of such $F[u]$. We observe  (see \cite[Lemma 1.1]{QS}), that since $F$ is positively $1$-homogeneous and convex (resp. concave) in $(M,e)$,
\begin{eqnarray}
F(M_1,e_1,x)-F(M_2,e_2,x)&\le& F(M_1-M_2,e_1-e_2,x)\label{conv1}\\ (\hbox{resp. } F(M_1,e_1,x)+F(M_2,e_2,x)&\le& F(M_1+M_2,e_1+e_2,x)).\label{conv2}
\end{eqnarray}

We assume that $\Omega=\rn$ or $\Omega$ is an exterior domain such that (without loss)
 $B_1\subset \rn\setminus \Omega\subset B_2$.
We start by observing that the hypothesis of Theorem \ref{landis} implies the existence of a positive solution in $\Omega$.

\begin{prop}\label{prop1} Under the above hypotheses, if $\ld$ satisfies the maximum principle in each bounded subdomain of $\Omega$ then there exists
$\psi\in H^1_{\mathrm{loc}}(\overline{\Omega})$
such that $\psi>0$ and $\ld[\psi]=0$ in $\Omega$.
\end{prop}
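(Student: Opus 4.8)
The plan is to obtain $\psi$ as a locally uniform and weak $H^1_{\mathrm{loc}}$ limit of solutions of the Dirichlet problem on an exhausting sequence of bounded subdomains. Write $\Omega_k:=\Omega\cap B_k$, which is a nonempty bounded subdomain of $\Omega$ for $k$ large, fix a reference point $x_*\in\Omega$, and take $k$ large enough that $x_*\in\Omega_k$. On $\Omega_k$ one solves
\[
\ld[\psi_k]=0\ \text{ in }\ \Omega_k,\qquad \psi_k-1\in H^1_0(\Omega_k),
\]
that is, $\psi_k=1$ on $\partial\Omega_k$ in the $H^1$ sense. Existence and uniqueness of $\psi_k$ follow from the standard linear theory: the assumed maximum principle on $\Omega_k$ gives uniqueness for the homogeneous Dirichlet problem, which via the Fredholm alternative forces solvability (see \cite[Chapter 8]{GT} and \cite[Theorem 4.9]{Du}; the subdomains $\Omega_k$ need not be smooth, but this is immaterial in the $H^1$ framework, and $\ld[1]=\mathrm{div}(b_1)+c\in H^{-1}(\Omega_k)$ since $q>n$, $p>p_0$).

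Positivity of $\psi_k$ comes from the maximum principle and the Harnack inequality. Since $\ld[\psi_k]=0\le 0$ in $\Omega_k$ and $\psi_k^-\in H^1_0(\Omega_k)$ (because $\psi_k=1\ge0$ on $\partial\Omega_k$), the maximum principle gives $\psi_k^-=0$, i.e.\ $\psi_k\ge0$. By interior De Giorgi--Moser estimates $\psi_k$ is continuous, and if it vanished somewhere in $\Omega_k$ the weak Harnack inequality (Theorem~A on a small ball) would force it to vanish on a whole neighborhood; since $\psi_k=1$ near $\partial\Omega_k$, this is impossible, so $\psi_k>0$ in $\Omega_k$. We then normalize $\tilde\psi_k:=\psi_k/\psi_k(x_*)$, a positive solution of $\ld[\tilde\psi_k]=0$ in $\Omega_k$ with $\tilde\psi_k(x_*)=1$.

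The core step is to derive local bounds on $\tilde\psi_k$ uniform in $k$. Given any subdomain $\Omega'$ with $\overline{\Omega'}\subset\Omega$, a Harnack chain of balls contained in $\Omega$ joining $x_*$ to $\Omega'$, together with the Harnack inequality (the classical local form, or Theorem~\ref{sharpharn}), provides constants $0<c(\Omega')\le C(\Omega')$, independent of $k$, with $c(\Omega')\le\tilde\psi_k\le C(\Omega')$ on $\Omega'$ for all large $k$. Interior Caccioppoli and De Giorgi--Moser estimates then bound $\tilde\psi_k$ uniformly in $H^1(\Omega')$ and in $C^{0,\alpha}(\overline{\Omega'})$. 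When $\Omega$ is an exterior domain one also needs $H^1$ control up to $\partial\Omega$: writing $a_k:=1/\psi_k(x_*)$ for the (constant) boundary value of $\tilde\psi_k$ on $\partial\Omega$, which is bounded above and below in $k$ by the Harnack chain, and choosing a cutoff $\zeta$ equal to $1$ on $B_{5/2}$ and supported in $B_3$, one tests the equation satisfied by $\tilde\psi_k-a_k$ against $\zeta^2(\tilde\psi_k-a_k)\in H^1_0(\Omega\cap B_3)$; using the uniform $L^\infty$ bound and the local $L^q$, $L^p$ norms of the coefficients, this yields a uniform bound for $\tilde\psi_k$ in $H^1(\Omega\cap B_{5/2})$. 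Hence $(\tilde\psi_k)$ is bounded in $H^1_{\mathrm{loc}}(\overline\Omega)$.

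Finally, along a subsequence $\tilde\psi_k\rightharpoonup\psi$ weakly in $H^1_{\mathrm{loc}}(\overline\Omega)$ and locally uniformly (by the uniform $C^{0,\alpha}$ bound and a compactness argument). Stability of weak solutions passes to the limit in $\ld[\tilde\psi_k]=0$: the coefficients are fixed, $D\tilde\psi_k\rightharpoonup D\psi$ in $L^2_{\mathrm{loc}}$ and $\tilde\psi_k\to\psi$ in $L^r_{\mathrm{loc}}$ for all $r<\infty$ (by the uniform $L^\infty_{\mathrm{loc}}$ bound and a.e.\ convergence), which handles the $b_1\tilde\psi_k$, $b_2\cdot D\tilde\psi_k$ and $c\tilde\psi_k$ terms; thus $\ld[\psi]=0$ in $\Omega$. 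The two-sided Harnack bounds survive the limit, giving $\psi(x_*)=1$ and $\psi\ge c(\Omega')>0$ on each $\Omega'$ with $\overline{\Omega'}\subset\Omega$, so $\psi>0$ in $\Omega$, while weak lower semicontinuity of the $H^1$ norm gives $\psi\in H^1_{\mathrm{loc}}(\overline\Omega)$. I expect the main obstacle to be exactly the uniform two-sided local bounds: the upper bound is immediate from the maximum principle, but the uniform strictly positive lower bound on compact subsets is the crux and is supplied precisely by the Harnack inequality through Harnack chains; the secondary technical point is the $H^1$-up-to-the-boundary estimate in the exterior-domain case.
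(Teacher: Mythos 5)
Your overall strategy is the same as the paper's: solve $\ld[\cdot]=0$ on an exhausting sequence of bounded truncations $\Omega\cap B_k$, use the assumed maximum principle plus Fredholm for solvability, normalize, derive uniform local bounds from the Harnack inequality, and pass to a weak limit. For $\Omega=\rn$ the argument is fine and essentially matches the paper's. The problem is in the exterior-domain case, and it comes precisely from your choice of boundary datum.

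You prescribe $\psi_k=1$ on the \emph{whole} of $\partial\Omega_k$ (both on $\partial\Omega$ and on $\partial B_k$), normalize $\tilde\psi_k=\psi_k/\psi_k(x_*)$, and then assert that $a_k=1/\psi_k(x_*)$ -- the constant value of $\tilde\psi_k$ on $\partial\Omega$ -- ``is bounded above and below in $k$ by the Harnack chain.'' This step is unjustified. Harnack chains compare values of $\tilde\psi_k$ only at interior points of $\Omega$ that can be linked by balls compactly contained in $\Omega$; they give no relation between $\psi_k(x_*)$ and the boundary value $1$ on $\partial\Omega$. Since $\partial\Omega$ is assumed only to be compact and squeezed between two spheres, with no regularity, there is no boundary Harnack or barrier argument available either, nor can you compare $\psi_k$ with the constant $1$ by the maximum principle, because $\ld[1]=c+\mathrm{div}(b_1)$ has no sign. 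So you have no control on $a_k$, and consequently your Caccioppoli estimate obtained by testing against $\zeta^2(\tilde\psi_k-a_k)$ does not yield a bound on $\tilde\psi_k$ in $H^1(\Omega\cap B_{5/2})$ that is uniform in $k$. This is exactly the point where the $H^1_{\mathrm{loc}}(\overline\Omega)$ conclusion (which is genuinely needed later, in the proof of Theorem \ref{landis}) is at risk.

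The paper sidesteps this by prescribing the \emph{split} boundary data $u_j=0$ on $\partial\Omega$ and $u_j=1$ on $\partial B_j$ (implemented by solving for $v_j\in H^1_0(G_j)$ with $\ld[v_j]=-\ld[\phi]$ and setting $u_j=v_j+\phi$, where $\phi$ is a cutoff vanishing near $\partial\Omega$). With $\tilde u_j=0$ on $\partial\Omega$, the test function $\tilde u_j\theta_m^2$ lies in $H^1_0(G_j)$ with no boundary term to control, and moreover the maximum principle on $G_m$ gives the pointwise barrier $\tilde u_j\le C_1(m)u_m$ up to $\partial\Omega$, since both vanish there and their values on $\partial B_m$ are comparable by interior Harnack. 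Combined with $u_m=v_m+\phi\in L^{2^*}(G_m)$ this yields the uniform $H^1$ bound up to $\partial\Omega$. If you replace your boundary condition $\psi_k=1$ on $\partial\Omega$ by $\psi_k=0$ on $\partial\Omega$ (keeping $\psi_k=1$ on $\partial B_k$) and argue as above, your proof becomes correct; as written, the exterior case has a gap.
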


\begin{prop}\label{prop2} Under the above hypotheses, if $F[u]$ satisfies the maximum principle in each bounded subdomain of $\Omega$ then there exists  $\psi\in W^{2,p}_{\mathrm{loc}}(\Omega)$
such that $\psi>0$ and $F[\psi]=0$ in $\Omega$.
\end{prop}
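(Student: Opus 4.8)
The plan is a classical exhaustion argument, adapted to the low-regularity, unbounded-coefficient setting, using only the Harnack inequalities of Section~\ref{secharn}, the comparison principle (which the assumed maximum principle will supply), and solvability of the Dirichlet problem in bounded domains. I would first fix a base point $x_0\in\Omega$ and a standard exhaustion of $\Omega$ by bounded domains $\Omega_k$ with smooth boundary, with $x_0\in\Omega_1$, $\overline{\Omega_k}\subset\Omega_{k+1}$ and $\bigcup_k\Omega_k=\Omega$; note that the (possibly irregular) boundary $\partial\Omega$ plays no role here, since $\overline{\Omega_k}\subset\Omega$ and only an interior solution is sought. On each $\Omega_k$ I would solve the Dirichlet problem $F[\psi_k]=0$ in $\Omega_k$, $\psi_k=1$ on $\partial\Omega_k$. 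The needed comparison principle for $F$ follows from the assumed maximum principle together with the structure of $F$: since $F(M,e,x)$ is positively $1$-homogeneous and convex (resp. concave) in $(M,e)$, \eqref{conv1} (resp. \eqref{conv2}) shows that $F[u_1]\ge0\ge F[u_2]$ implies $F[u_1-u_2]\ge0$, whence $u_1\le u_2$ on $\partial\Omega_k$ forces $u_1\le u_2$ in $\Omega_k$; combined with Perron's method (barriers at the smooth $\partial\Omega_k$ exist by uniform ellipticity and the local integrability of $b,c$) this gives $\psi_k\in C(\overline{\Omega_k})$, and the interior regularity theory for $L^q$-viscosity solutions (recall $q>n$, $p>p_0=p_E$; see \cite{CCKS}, \cite{E}) puts $\psi_k\in W^{2,p}_{\mathrm{loc}}(\Omega_k)$.

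Next I would establish positivity and renormalize. Since $F$ is positively $1$-homogeneous, $F(0,0,x)=0$, so $0$ is a solution; as $\psi_k=1>0$ on $\partial\Omega_k$, the maximum principle gives $\psi_k\ge0$. Arguing as in the Remarks following Theorem~A, from $F[\psi_k]=0$, $\psi_k\ge0$ and \eqref{ellipoff} one obtains
\[
\mm(D^2\psi_k)-b(x)|D\psi_k|-c^-(x)\psi_k\le 0\quad\text{in }\Omega_k,
\]
an extremal inequality with $c^-\ge0$; the weak Harnack inequality of Theorem~A then yields the strong maximum principle, so $\psi_k>0$ in $\Omega_k$ (it is continuous with boundary value $1$). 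I would then set $w_k:=\psi_k/\psi_k(x_0)$, still a positive solution of $F[w_k]=0$ by $1$-homogeneity, normalized so that $w_k(x_0)=1$.

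Finally I would derive $k$-uniform interior estimates and pass to the limit. For a compact $K\subset\Omega$, choosing a compact connected $K'\subset\Omega$ with $K\cup\{x_0\}\subset\mathrm{int}(K')$, for all large $k$ we have $K'\subset\Omega_k$, and the classical interior Harnack inequality (obtained from Theorem~A by the usual covering/chaining argument, as proved in the Appendix), applied along a chain of fixed-size balls in a neighborhood of $K'$ inside $\Omega$, gives $C_K^{-1}\le w_k\le C_K$ on $K$ with $C_K$ independent of $k$. The interior $W^{2,p}$ estimate for $L^q$-viscosity solutions then bounds $\|w_k\|_{W^{2,p}(K)}$ uniformly in $k$; since $p>p_0>n/2$ we have $W^{2,p}_{\mathrm{loc}}\hookrightarrow\hookrightarrow C^0_{\mathrm{loc}}$, so a diagonal extraction produces a subsequence with $w_{k_j}\to\psi$ locally uniformly and weakly in $W^{2,p}_{\mathrm{loc}}(\Omega)$. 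By stability of $L^q$-viscosity solutions under locally uniform limits with uniformly controlled coefficients, $F[\psi]=0$ in $\Omega$, the regularity theory gives $\psi\in W^{2,p}_{\mathrm{loc}}(\Omega)$, and $\psi\ge C_K^{-1}>0$ on each compact $K$ (in particular $\psi(x_0)=1$), so $\psi>0$ in $\Omega$, as required.

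I expect the crux to be the $k$-uniform two-sided control of $w_k$ on compact subsets: this is exactly where the Harnack inequality with its sharp dependence on the \emph{Lebesgue} norms of $b$ and $c$ (Theorem~A, or Theorem~\ref{sharpharn}) is essential --- with bounded coefficients this step is classical, but here it is the point at which the hypotheses $b\in L^q_{ul}$, $c\in L^p_{ul}$ are genuinely used. The other ingredients --- deducing the comparison principle from the maximum principle, Perron's method, and the compactness/stability theory for $L^q$-viscosity solutions --- are standard, the mild technical care being to choose a smooth exhaustion so that the (possibly rough) $\partial\Omega$ is avoided. The divergence-form case (Proposition~\ref{prop1}) would follow the same scheme, with weak $H^1$ solutions, Lax--Milgram/Fredholm-type solvability of the linear Dirichlet problems, and De Giorgi--Nash--Moser interior estimates in place of the viscosity machinery.
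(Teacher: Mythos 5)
Your overall scheme — exhaust $\Omega$ by smooth bounded domains, solve the Dirichlet problem with constant boundary data, normalize at a base point, use Harnack for $k$-uniform two-sided bounds on compacta, and pass to the limit via $W^{2,p}_{\mathrm{loc}}$ compactness and viscosity-solution stability — coincides with the paper's. The positivity argument via the weak Harnack / strong maximum principle and the final extraction are also essentially the paper's.

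The genuine gap is in your solvability step. You invoke Perron's method together with a comparison principle that you derive from \eqref{conv1}--\eqref{conv2} and the assumed maximum principle. But the maximum principle as assumed (see Remark~\ref{remTh2}(ii)) is a one-function statement: $F[u]\le 0$ and $u\ge 0$ on $\partial G$ implies $u\ge 0$. To upgrade this to the two-function comparison you need, you must know that $u_1-u_2$ is an $L^q$-viscosity subsolution of $F[\,\cdot\,]\ge 0$; inequalities \eqref{conv1}--\eqref{conv2} give this only pointwise a.e.\ for $W^{2,p}$ functions, so at least one of $u_1,u_2$ must be a \emph{strong} solution. In a Perron construction the candidate functions are merely $L^q$-viscosity sub-/supersolutions, so this is circular. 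Moreover Perron's method itself is not an available tool here: with $c\in L^p$ unbounded and sign-changing, there is no ready-made supersolution (the constant $1$ works only if $c\le 0$), and the comparison principle for $L^p$-viscosity solutions with such coefficients is precisely what is \emph{not} known independently of the assumed maximum principle. The paper circumvents all of this by a different device: it first shifts the operator to $F[u]-\sigma u$ with $\sigma$ large, so that $\|(c-\sigma)^+\|_{L^p}$ is small and \cite[Theorem~1]{S1} gives existence, uniqueness and $W^{2,p}_{\mathrm{loc}}\cap C^\alpha(\overline G)$ regularity for the shifted Dirichlet problem. This defines a compact solution map $S:C(\overline G)\to C(\overline G)$, and the Leray--Schauder alternative is applied; the assumed maximum principle for $F$ is used exactly once, to exclude the unbounded branch (and then once more for uniqueness, now legitimately between two \emph{strong} solutions). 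You should replace the Perron step by this fixed-point argument — or an equivalent one that produces a strong solution directly — rather than appeal to a comparison principle that is not yet available at that stage.
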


\noindent{\it Proof of Proposition \ref{prop1}}.  Fix a bounded domain  $G\subset\Omega$.
 Under our assumptions on the coefficients, it is standard that the bilinear form associated with $\ld^{(\sigma)}=\ld - \sigma$ is continuous and coercive on $H^1_0(G)$ for  $\sigma$ large enough (see \cite{GT}, \cite{T2}).  By Lax-Milgram theorem $\ld^{(\sigma)}$ is a bijection from $H^1_0(G)$ to $H^{-1}(G)$. The equation $\ldu= g+\mathrm{div}(h)$ can be written $\left(I+\sigma (\ld^{(\sigma)})^{-1}\right)u=(\ld^{(\sigma)})^{-1}(g+\mathrm{div}(h)$, and the Fredholm alternative (observe the inclusion of $H^1_0(G)$ in $H^{-1}(G)$ is compact) gives, for each $g\in L^p(G)$, $h\in L^q(G)$, a unique solution of $\ldu= g+\mathrm{div}(h)$ in $H^1_0(G)$, since the maximum principle guarantees that the kernel of $\ld$ is trivial.

In the case when $\Omega$ is an exterior domain as above,
we fix a smooth function $\phi$ such that $\phi=0$ for $|x|\le 2$, $\phi=1$ for $|x|\ge  3$ and $0\le\phi\le 1$.
If $\Omega=\rn$, we just set $\phi=1$.
For given $j\ge 2$, let $G_j=\Omega\cap B_j$, let $v_j\in H^1_0(G_j)$ be the solution of
$$\ld[v_j] = -\ld[\phi]=-\mathrm{div}(A(x)D\phi +  b_1(x)\phi) - (b_2(x){\cdot} D\phi +c(x)\phi),$$ given by the first paragraph
and set $u_j=v_j+\phi$. Then
\begin{equation}\label{BCuj}
\hbox{$\ld[u_j]=0$ in $G_j$, \ \ $u_j=0$ on $\partial\Omega$ (if nonempty), \ \ and $u_j=1$ on $\partial B_j$,}
\end{equation}
so $u_j>0$ in $G_j$ by the maximum and the strong maximum principles (the latter is a consequence of the weak Harnack inequality).
Note that, by global H\"older regularity (see \cite[Theorem 8.29]{GT}), we have $u_j\in C(\Omega\cap \overline B_j)$.
Fix an open ball $\omega\subset B_3\setminus B_2$.
We renormalize $u_j$ by considering $\tilde u_j= u_j/\|u_j\|_{L^2(\omega)}$ ($j\ge 3$), which satisfies $\ld[\tilde u_j]=0$ in $G_j$
along with $\|\tilde u_j\|_{L^2(\omega)}=1$.

Take any integer $m\ge 3$. Since $\inf_\omega \tilde u_j\le |\omega|^{-1/2}\|\tilde u_j\|_{L^2(\omega)}=c(n)$,
when $\Omega=\rn$, the Harnack inequality guarantees that
\begin{equation}\label{controljmRn}
\hbox{$0\le\tilde u_j\le  C_1(m)$ in $G_m$,\quad $j\ge m+2$.}
\end{equation}
When $\Omega$ is an exterior domain, we deduce from the Harnack inequality that
$0\le \tilde u_j\le  C_1(m)$ in $B_{m+1}\setminus B_2$ for all $j\ge m+2$,
and it then follows from \eqref{BCuj} (with $j$ replaced by $m$) and the maximum principle that
\begin{equation}\label{controljm}
\hbox{$0\le\tilde u_j\le  C_1(m)u_m$ in $G_m$,\quad $j\ge m+2$.}
\end{equation}
Now, for $j\ge m+2$, testing the equation $\ld[\tilde u_j]=0$ in $G_j$ with $\tilde u_j\theta_m^2$, where $\theta_m\in C^\infty_0(\rn)$ is such that $\theta_m=1$ for $|x|\le m/2$ and $\theta_m=0$ for $|x|\ge m$,
we get
$$\int_{G_j} \theta_m^2 D \tilde u_j\cdot AD \tilde u_j+2 \theta_m\tilde u_j D \theta_m\cdot A D \tilde u_j
=\int_{G_j}  \theta_m^2\tilde u_j(b_2-b_1) \cdot D \tilde u_j-\tilde u_j^2 b_1\cdot D(\theta_m^2)+c\tilde u_j^2\theta_m^2.$$
Using Young's inequality, \eqref{hypA} and then H\"older's inequality, we easily deduce that
$$
\begin{aligned}
\frac{\lambda}{2}\int_{G_{m/2}} |D \tilde u_j|^2
&\le  C_2(m)\int_{G_m} (1+|b_2-b_1|^2+|b_1|+|c|)\tilde u_j^2 \\
&\le  C_2(m)\Bigl(1+\bigl\||b_2-b_1|^2+|b_1|+|c|\bigr\|_{L^{n/2}(G_m)}\Bigr)\|\tilde u_j\|^2_{L^{2^*}(G_m)}.
\end{aligned}
$$
From our assumptions on the coefficients and \eqref{controljm} (resp., \eqref{controljmRn}), it follows that for all $j \ge m+2$,
$$\|\tilde u_j\|_{H^1(G_{m/2})}\le C(m)(1+\lambda^{-1/2})\|u_m\|_{L^{2^*}(G_m)}<\infty\quad \hbox{(resp., $\le C(m)(1+\lambda^{-1/2})$)}$$
(recall that $u_m=v_m+\phi\in L^{2^*}(G_m)$ due to $v_m\in H^1_0(G_m)$ and Sobolev's imbedding).
Therefore, for each $m\ge 3$, the sequence $\{\tilde u_j\theta_m\}_{j=1}^\infty$ is bounded in $H^1_0(\Omega)$.
By a diagonal procedure, we deduce that $\tilde u_j$ has a subsequence which converges weakly in $H^1_{loc}(\overline\Omega)$ and strongly in $L^2_{loc}(\overline\Omega)$
to a nonnegative solution $\psi$ in the whole $\Omega$, and $\psi$ is nontrivial due to $\|\psi\|_{L^2(\omega)}=1$. As we recalled in Remark 1.1, $\psi$ is H\"older continuous in $\Omega$.
Finally,  we have $\psi>0$ by the SMP.  \hfill $\Box$ 

\begin{rem}
 Although we shall not use this fact, we note that, in the case when $\Omega$ is an exterior domain,
the function $\psi$ obtained in Proposition \ref{prop1} actually satisfies homogeneous boundary conditions, in the sense that
$\varphi\psi\in H^1_0(\Omega)$ for all $\varphi\in C^\infty_0(\rn)$.
\end{rem}

\medskip

\noindent{\it Proof of Proposition \ref{prop2}}. Fix a bounded smooth domain  $G\subset\Omega$. Since $\|(c-\sigma)^+\|_{L^p(G)}\to 0$ as $\sigma\to\infty$, we can fix $\sigma$ large enough so that the operator $F[u]-\sigma u$ satisfies the hypotheses of \cite[Theorem 1]{S1}, in particular, condition (6) there. By that theorem and well-known regularity results (see the Remark below) for each $v\in C(\overline{G})$, $g\in L^p(G)$, there is a unique $u\in W^{2,p}_{\mathrm{loc}}(G)\cap C^\alpha(\overline{G})$ such that $F[u]-\sigma u = g-\sigma v$  in $G$ and $u=0$ on $\partial G$. The operator $S:C(\overline{G})\to C(\overline{G})$ given by $S[v]=u$ is thus well defined and compact.

We briefly recall the Leray-Schauder alternative.
\begin{thm}[Corollary 1.19, \cite{CQ}]
	Let $S:X\to X$ be compact, where X is a Banach space. Then one of the following holds:

	(i) $x - tS(x) = 0$ has a solution for  every $t\in [0,1]$, or

	(ii) the set $\{x : \exists\ t \in[0,1] : x-tS(x)=0\}$ is unbounded.
\end{thm}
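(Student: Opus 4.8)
The plan is to prove the dichotomy in contrapositive form: assuming that the set $\Sigma:=\{x\in X:\ \exists\,t\in[0,1],\ x=tS(x)\}$ appearing in alternative (ii) is \emph{bounded}, I will show that alternative (i) holds, i.e.\ that $x=tS(x)$ is solvable for every $t\in[0,1]$. The main tool is Schauder's fixed point theorem, applied after composing the map $tS$ with a radial retraction onto a large ball; alternatively the conclusion follows from the homotopy invariance of the Leray--Schauder degree.

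Concretely, since $\Sigma$ is bounded, I would fix $R>0$ with $\Sigma\subset B_R$, the \emph{open} ball of radius $R$ about the origin, and let $\rho:X\to\overline{B_R}$ be the radial retraction, $\rho(y)=y$ for $\|y\|\le R$ and $\rho(y)=Ry/\|y\|$ for $\|y\|>R$, which is continuous. Given $t\in[0,1]$, consider $T:=\rho\circ(tS):\overline{B_R}\to\overline{B_R}$. Because $S$ is compact and $\rho$ is continuous with range contained in $\overline{B_R}$, the map $T$ is continuous and compact and sends the closed, bounded, convex set $\overline{B_R}$ into itself, so Schauder's theorem produces a fixed point $x_0=\rho(tS(x_0))$.

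The final step is to show that the retraction acts trivially at $x_0$. If $\|tS(x_0)\|\le R$, then $\rho$ is the identity there and $x_0=tS(x_0)$, the desired solution. If instead $\|tS(x_0)\|>R$, then $x_0=\mu\,tS(x_0)$ with $\mu:=R/\|tS(x_0)\|\in(0,1)$, hence $x_0=(\mu t)S(x_0)$ with $\mu t\in[0,1]$, so $x_0\in\Sigma$; but at the same time $\|x_0\|=\|\rho(tS(x_0))\|=R$, contradicting $\Sigma\subset B_R$. Thus only the first case can occur, which proves (i). I expect the only delicate point to be precisely this last step: the a priori bound must be recorded with the \emph{open} ball $B_R$, so that the equality $\|x_0\|=R$ produces a genuine contradiction; everything else is routine once Schauder's theorem is granted. (For the degree-theoretic variant: if $\Sigma\subset B_R$, then $x-tS(x)\ne 0$ on $\partial B_{R'}$ for every $R'>R$ and every $t\in[0,1]$, so $\deg(I-tS,B_{R'},0)=\deg(I,B_{R'},0)=1$ by homotopy invariance and normalization of the degree, whence $I-tS$ vanishes somewhere in $B_{R'}$.)
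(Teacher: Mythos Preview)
Your argument is correct: the retraction-plus-Schauder proof (equivalently, the Leray--Schauder degree computation you sketch) is a standard and complete proof of the Leray--Schauder alternative, and your handling of the boundary case via the strict inclusion $\Sigma\subset B_R$ is exactly right.

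However, there is nothing to compare against: the paper does \emph{not} prove this statement. It is quoted verbatim as an external result (Corollary~1.19 of \cite{CQ}, the Bandle--Reichel handbook chapter) and then applied in the proof of Proposition~\ref{prop2} to obtain solvability of the Dirichlet problem $F[u]=g$ in $G$, $u=0$ on $\partial G$. So your proof is a correct supplement that the paper simply outsources to the reference.
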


If the alternative (ii) happened for our $S$, we would have a sequence $u_n$ such that $\|u_n\|_{C(\overline{G})}\to \infty$ and $F[u_n]-\sigma u_n = t_n(g-\sigma u_n)$ in $G$, $u_n=0$ on $\partial G$, for some $t_n\in [0,1]$. Setting $\tilde u_n = u_n / \|u_n\|_{C(\overline{G})}$, we see that $\|u_n\|_{C^\alpha(\overline{G})} \le C$, by global H\"older regualrity (see \cite[Theorem 6.2]{KSh}). Passing to the limit  along
 a subsequence, using the stability properties of viscosity solutions with respect to uniform convergence (\cite[Proposition 9.4]{KSh}), we find a solution of $F[\tilde u] -(1-t_0)\sigma \tilde u = 0$ in $G$, $\tilde u=0$ on $\partial G$, and $\tilde u\not \equiv 0$, which contradicts the maximum principle.  Note $(1-t_0)\sigma\ge 0$ so $F -(1-t_0)\sigma $ satisfies the maximum principle if $F$ does.

So by the Leray-Schauder alternative the equation $F[u] = g$ in $G$, $ u=0$ on $\partial G$, has a solution for each $g\in L^p(G)$, which is strong by regularity ($u\in W^{2,p}_{\mathrm{loc}}(G)\cap C^\alpha(\overline{G})$), and then unique by the maximum principle.  We now  solve $F[v] = -c$ in $G$, $ v=0$ on $\partial G$, and set $u=v+1$. Then $F[u]=0$ in $G$ and $u=1$ on $\partial G$ so $u>0$ in $\overline{G}$, by the maximum and the strong maximum principle.

Take an increasing sequence of smooth domains $\Omega_j\subset\Omega$ which converges to $\Omega$. Set $G_j=\Omega_j\cap B_j$ and take the  solutions $u_j$ given by the above procedure in $G_j$. Fix a point $x_0\in G_1$ and replace $u_j$ by $\tilde u_j = u_j/u_j(x_0)$, $F[\tilde u_j]=0$ in $G_j$, $\tilde u_j>0$ in $G_j$, $\tilde u_j (x_0)=1$. By the Harnack inequality, for each compact $K\subset\Omega$, 
 we have $0<u_j\le C(K)$ in $K$, for $j>j_0(K)$, where $j_0$ is such that $K\subset \Omega_{j_0}$. By regularity $u_j$ is bounded in $W^{2,p}(K)$, and hence (up to a subsequence) converges weakly in that space and strongly in $C(K)$ (recall $p>n/2$).  Recalling the stability of viscosity solutions with respect to uniform convergence, and that the operator $u\to b|Du| + cu$ is precompact from $W^{2,p}$ to $L^p$ for $b\in L^q$, $c\in L^{p}$, $q>n$, $p>n/2$, by the embeddings $W^{2,p}\hookrightarrow W^{1,np/(n-p)} \hookrightarrow C^\alpha$, we deduce by a diagonal procedure that a subsequence of $u_j$ converges weakly in $W^{2,p}_{\mathrm{loc}}(\Omega)$  and strongly in $C_{\mathrm{loc}}(\Omega)$ to a strong solution $\psi$ in $\Omega$,
with $\psi(x_0)=1$.
Finally, we have $\psi>0$ by the SMP.
 \hfill $\Box$

\begin{rem} For the reader's convenience we quote precisely the results from
the general theory of fully nonlinear equations with measurable coefficients. The bases of the theory for bounded ingredients can be found in \cite{CCKS}, \cite{CKS}, \cite{W}. Extensions to  unbounded coefficients, in the generality which we require were obtained as follows: the ABP inequality can be found in \cite{KSABP}, the Harnack inequality in \cite{KSh}, \cite{KSl},  the global $C^\gamma$ estimates in \cite[Theorem 6.2]{KSh}, the strong solvability and $W^{2,p}_{\mathrm{loc}}$-estimates for extremal equations in \cite[Theorem 7.1]{KSh} (the same proof applies to  convex/concave operators) the stability of viscosity solutions with respect to uniform convergence in \cite[Proposition 9.4]{KSh}, global $W^{2,p}$ estimates follow either from \cite{W} (with a straightforward extension to unbounded coefficients using the already available global $C^\gamma$ estimates and stability) or from  \cite[Proposition 2.4]{KSgl}. Note also that the results in \cite{S1} were stated for $c,f\in L^n$, however all ingredients of the proofs in \cite{S1}, which we just quoted, were later established for $c,f\in L^p$, $p>p_0$. \end{rem}
\medskip

\noindent{\it Proof of Theorem \ref{landis}}. If the elliptic operator is in divergence form, let $\psi$ be the function given by Proposition \ref{prop1} (recall $\psi$ is continuous, see Remark \ref{remTh2}). In the fully nonlinear case, if $F$ is convex in the derivatives of $u$, let $\psi$ be the function given by Proposition \ref{prop2} applied to $F[u]$; whereas if $F$ is concave let $\psi$ be the function given by Proposition \ref{prop2} applied to $\tilde F[u]=-F[-u]$. In all cases,  normalize $\psi$ so that $\psi(x_0)=1$ for some point $x_0\in \rn\setminus B_2\subset \Omega$. Then the sharp Harnack inequality, \eqref{sharpHarnack} in Theorem~{\ref{sharpharn}} with $g=0$, gives  ($G_R$ is defined in Theorem~\ref{sharpharn})
$$
\inf_{G_R} \psi \ge e^{-C_1R}.
$$
Let $u$ be a solution as in Theorem \ref{landis}, with $u\le 0$ on $\partial \Omega$ if the latter is not empty (or replace $u$ by $-u$, and $F$ by $\tilde F$ in the fully nonlinear case). Fix $\delta>0$. Under assumption \eqref{concllandis2} (in particular if \eqref{concllandis} is true),
there exists a sequence $R_i>3$, $R_i\to\infty$ 
such that
\begin{equation}\label{udeltapsi}
\hbox{ $u<\delta\psi $ on $\partial B_{R_i}$.}
\end{equation}

With $\delta$ and $i$ being fixed, our goal is now to apply the maximum principle
to show that $u\le\delta\psi$ in $\Omega_i:=\Omega \cap B_{R_i}$.
We shall not work directly with $u-\delta\psi$,
because difficulties arise in order to verify the boundary conditions on $\partial\Omega$ when $\Omega$ is nonsmooth.
It turns out that this can be circumvented by considering instead $z:=u_+-\delta\psi$ (in the divergence case) or $\tilde z:=(u-\delta\psi)_+$
(in the non-divergence case).

First of all, we observe that
\begin{equation}\label{Lzge0}
\mathcal{L}_D[z]\ge 0 \quad\hbox{ (resp., $F[\tilde z]\ge 0$)\quad in $\Omega$.}
\end{equation}
Indeed, in the divergence case,
this follows from
the fact that $D(u_+)=\chi_{\{u>0\}}\, Du$, along with
Kato's inequality
$$\mathrm{div}(A(x)Du^+)\ge \chi_{\{u>0\}}\ \mathrm{div}(A(x)Du)$$
(in the weak Sobolev sense). In the nondivergence case,
\eqref{Lzge0} follows from the fact that the maximum of the two viscosity (sub-) solutions $u-\delta\psi$ and $0$ is a viscosity subsolution. To check that $u-\delta \psi$ is a subsolution, i.e. $F[u-\delta\psi]\ge0$,  we use \eqref{conv1} if $F$ is convex, resp. \eqref{conv2} if $F$ is concave, as well as the choice of the strong solution $\psi$ we made above.

We next turn to the boundary conditions
on $\partial \Omega_i=\partial \Omega \cup \partial B_{R_i}$.
First considering the divergence case, we shall check that $z \le 0$ on $\partial \Omega_i$, which means
\begin{equation}\label{zplusH10}
z_+\in H^1_0(\Omega_i).
\end{equation}
Since no smoothness is assumed on $\Omega$, we cannot use traces on $\partial\Omega$ and some care is needed.
Fix a smooth function $\varphi$ such that
$\varphi=1$ for $|x|\le 2$, $\varphi=0$ for $|x|\ge 3$ and $0\le \varphi\le 1$.
By assumption (cf.~Remark~\ref{remTh2}), we have $u^+\varphi\in H^1_0(\Omega)$,
hence actually $u^+\varphi\in H^1_0(\Omega_i)$. Therefore there exists a sequence $\theta_j\in C^\infty_0(\Omega_i)$ such that
$\theta_j\to u^+\varphi$ in $H^1(\Omega)$.
Now setting $\phi_j:=\theta_j-\delta\varphi\psi+(1-\varphi)(u_+-\delta\psi)$
and recalling $\psi\in H^1_{loc}(\overline\Omega)$ and $u\in H^1_{loc}(\Omega)$,
 we have $\phi_j\in H^1(\Omega_i)$ with
$\phi_j\to u_+-\delta\psi=z$, hence $(\phi_j)_+\to z_+$, in $H^1(\Omega_i)$. But, using \eqref{udeltapsi}, $\varphi,\psi\ge0$, and
the continuity of $u$ and $\psi$ in $\Omega$ (cf.~Remark \ref{remTh2}), in particular near $\partial B_{R_i}$,
we easily check that $(\phi_j)_+\in H^1(\Omega_i)$ vanishes in a neighborhood of $\partial \Omega$, as well as on $\partial B_{R_i}$, 
hence $(\phi_j)_+\in H^1_0(\Omega_i)$.
This guarantees \eqref{zplusH10}. In the nondivergence case, we need to show that
\begin{equation}\label{tildezle0}
\tilde z\in C(\overline\Omega_i)\quad\hbox{with $\tilde z\le 0$ on $\partial\Omega_i$.}
\end{equation}
But since $u\in C(\overline\Omega)$ with $u\le 0$ on $\partial \Omega$ and $\psi\in C(\Omega)$,
property \eqref{tildezle0} follows from \eqref{udeltapsi} and the fact that $0\le \tilde z=(u-\delta\psi)_+\le u$.

Now, in view of \eqref{Lzge0}-\eqref{tildezle0}, we deduce from the maximum principle that
$z\le 0$ or $\tilde z\le 0$, hence $u\le \delta\psi $ in $\Omega \cap B_{R_i}$. Letting $i\to\infty$ and then $\delta \to 0$ we conclude that
$u\le 0$.  
But then Theorem \ref{sharpharn} applies to $-u$, 
hence, after the normalization $u(x_0)=-1$, either $u\equiv0$ or  $
\inf_{G_R} |u| \ge e^{-C_1R}$ for all $R>2$, a contradiction with assumption \eqref{concllandis2}.

\section{Appendix. Proof of Theorem A.}

First, in the divergence case, Theorem A follows from \cite{T2} (or see Remark at the end of \cite[Section 8.10]{GT}). So we concentrate here on the non divergence (fully nonlinear) case.

We recall the ABP inequality: if diam$(\Omega)\le 1$, $\|b\|_{L^q(\Omega)}\le 1$, $c,g\in L^p(\Omega)$, $p>p_0$, $b,c,g\ge0$, and $w\in C(\overline{\Omega})$ is a viscosity solution of
$$
\mp(D^2w) + b|Dw|-cw\ge -g\quad\mbox{ in }\; \Omega, \qquad u\le 0\quad\mbox{ on }\; \partial \Omega,
$$
then for some $C_0=C_0(n,q,\lambda, \Lambda)$
$$
\sup_\Omega w\le C_0\|g\|_{L^p(\Omega^+)}, \qquad \Omega^+=\{x\in\Omega\::\: w(x)>0\}.
$$
This follows from Theorem 2.9 in \cite{KSABP}, applied on each connected component of $\Omega^+$ (noting that $-cw\le0$ on that set).

We now  prove Theorem A. We start with the weak Harnack inequality, and provide a proof based on the original approach by Krylov and Safonov, \cite{Saf}. Note that $F[u]\le g$, \eqref{ellipoff} and $u\ge0$ imply
\begin{equation}\label{extreq}
\mathcal{M}_{\lambda,\Lambda}^-(D^2u) -b|Du|-c^-u\le g, \qquad u\ge0.
\end{equation}
  We assume that \eqref{extreq} holds in  $B_2$, $\|b\|_{L^q(B_2)}\le 1$, $\|c\|_{L^{p}(B_2)}\le 1$, and want to prove \eqref{weakha}.

In the following all constants will be allowed to depend on $n,p,q,\lambda,\Lambda$.  By a simple covering argument, it is enough to show that there exists $\rho_0\in (0,1/2)$ such that for each $x_0\in B_1$
\begin{equation}\label{ineqrho0}
\left(\int_{B_{\rho_0}(x_0)} u^\epsilon\,dx\right)^{1/\epsilon}\le C\left( \inf_{B_{\rho_0}(x_0)} u + \|g\|_{L^{p}(B_{2\rho_0}(x_0))}\right).
\end{equation}

\begin{prop}\label{growth} There exist positive constants $\rho_0, \kappa, \delta, \bar C$ such that for each $x_1\in  B_1$ and each $\rho\in (0,\rho_0]$, if for some $a>0$
\begin{equation}\label{ineqrho}
|\{u> a\}\cap B_{\rho}(x_1)|\ge (1-\delta) |B_{\rho}(x_1)|
\end{equation}
then
\begin{equation}\label{conclrho}
\inf_{B_{\rho}(x_1)}u > \kappa a - \bar C \rho^{2-n/p}\|g\|_{L^{p}(B_{2\rho}(x_1))}.
\end{equation}
\end{prop}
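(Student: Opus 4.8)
The plan is to prove Proposition \ref{growth} by the standard Krylov--Safonov measure-theoretic argument, adapted to the present setting with unbounded drift $b\in L^q$ and zero-order term $c\in L^p$. The key ingredients are: (i) a barrier/subsolution construction in an annulus, giving a pointwise lower bound on $u$ from a lower bound on the measure of a superlevel set; and (ii) the ABP inequality recalled just above, which absorbs the effect of $b$, $c^-$ and the right-hand side $g$. Throughout we exploit $\|b\|_{L^q(B_2)}\le 1$, $\|c\|_{L^p(B_2)}\le 1$ so that, after rescaling to a small ball $B_{2\rho}(x_1)$ with $\rho\le\rho_0$, the scaled coefficients $\tilde b=\rho b(\rho\cdot)$ and $\tilde c=\rho^2 c(\rho\cdot)$ have small norms; choosing $\rho_0$ small makes the barrier argument go through uniformly.

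First I would reduce to the model configuration: by translating and scaling, it suffices to work on $B_1$ (playing the role of $B_\rho(x_1)$) with the rescaled inequality $\mathcal{M}^-_{\lambda,\Lambda}(D^2 u)-\tilde b|Du|-\tilde c^- u\le \tilde g$, where $\|\tilde b\|_{L^q}$, $\|\tilde c\|_{L^p}$ are as small as we like by taking $\rho_0$ small, and $\|\tilde g\|_{L^p}=\rho^{2-n/p}\|g\|_{L^p(B_{2\rho}(x_1))}$. Next I construct the Krylov--Safonov barrier: a function $w$ on an annulus $B_2\setminus B_{1/4}$ (or similar radii) that is a supersolution of the extremal operator $\mathcal{M}^+_{\lambda,\Lambda}$ with the right sign on the drift term, equals a large constant on the inner sphere, is nonpositive on the outer sphere, and has a controlled negative part; the classical choice $w(x)=c_1(|x|^{-\alpha}-c_2)$ for suitable $\alpha,c_1,c_2$ works, with the small $\|\tilde b\|_{L^q}$ needed precisely to keep $-\tilde b|Dw|$ from destroying the supersolution property (this is where the $\beta_q$-type scaling exponent $1-n/q$ in the definition of $r_0$ enters). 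Then the comparison principle — in the $L^q$-viscosity sense — applied to $u$ and $\eta w$ on the region where $u<a$ on the inner sphere, combined with the ABP inequality to handle $\tilde c^-u$ and $\tilde g$ as a right-hand side on $\{u-\eta w<0\}$, yields: if the bad set $\{u\le a\}\cap B_\rho(x_1)$ is small enough (measure $\le\delta|B_\rho|$), then $u$ cannot dip too low anywhere in $B_\rho(x_1)$, which is exactly \eqref{conclrho}.

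More precisely, the mechanism is: suppose for contradiction that $\inf_{B_\rho(x_1)}u$ is too small; pick a point $y$ nearly achieving the infimum. The hypothesis \eqref{ineqrho} says $u>a$ on most of the ball, so on a slightly larger concentric ball one still has $u>a$ on a set of definite measure, and in particular there is a sphere around $y$, at a fixed fraction of the radius, on which the "bad" portion $\{u\le a\}$ has small relative measure. Sliding the barrier $w$ (centered at $y$, scaled so it equals roughly $a$ at that intermediate radius and vanishes further out) under $u$, we get a lower bound for $u(y)$ of the form $\kappa a$ minus the ABP contribution $\bar C\rho^{2-n/p}\|g\|_{L^p(B_{2\rho}(x_1))}$ minus an error proportional to the small bad measure; choosing $\delta$ small makes the latter error absorbable into $\kappa a$, giving the contradiction and hence \eqref{conclrho}.

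The main obstacle I expect is making the comparison step fully rigorous in the $L^q$-viscosity framework with merely measurable (and unbounded-in-$L^q$) coefficients: one cannot naively subtract functions or use classical maximum principles, so the barrier must itself be smooth (it is) and one must invoke the ABP inequality on the open set $\{u-\eta w<0\}$ exactly in the form quoted above, being careful that on that set the zero-order coefficient $\tilde c^-\ge0$ appears with the good sign (so ABP applies with constant independent of $\|\tilde c\|_{L^p}$, as the second Remark after Theorem A notes), and that the term $-\tilde c^- u$ is handled by writing $-\tilde c^-u\le 0$ pointwise where $u\ge0$, or by moving $\tilde c^- u$ to the right-hand side and bounding it via the already-established sup bound on $u$ on the relevant annulus. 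Keeping track of the scaling exponents so that the final estimate is genuinely in terms of $\rho^{2-n/p}\|g\|_{L^p}$ (and not some other power) is the other point requiring care, but it is routine once the barrier radii are fixed.
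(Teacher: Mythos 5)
Your overall strategy (rescale so the coefficients have small $L^q$/$L^p$ norms, compare $u$ with an explicit barrier, absorb $b$, $c^-$ and $g$ via the ABP estimate on the small contact set) is the same as the paper's. But the central step — turning the measure hypothesis \eqref{ineqrho} into a pointwise lower bound on $u$ at the centre of the ball — is where your proposal breaks down. The barrier you propose, $w(x)=c_1(|x|^{-\alpha}-c_2)$ ``centered at $y$'', blows up at $y$ and is only defined on an annulus, so the comparison $u\ge\eta w$ can never yield a bound on $u(y)$ itself. Your description (``sliding the barrier under $u$'' given control of the bad set on one sphere around $y$) is therefore incoherent as a mechanism for a pointwise bound at the centre: an annular barrier propagates a bound that you already have on an inner sphere outward to the annulus, but you do not yet have any pointwise information to feed into it, only a measure estimate. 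In other words, you have omitted the measure-to-pointwise step, which is the heart of the growth lemma.

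The paper handles this with two separate, cleanly defined barriers. First, after rescaling to $B_2$ with small $b_\rho,c_\rho$, it takes the ``bowl'' barrier $v(x)=1-|x|^2$, which is defined (and positive) at the origin, observes that $\{v>u\}\cap B_1$ has measure $\le\delta|B_1|$ by the hypothesis, and applies ABP to $v-u$ on $B_1$; because the ABP constant involves $\|g\|_{L^p(\Omega^+)}$ only on the contact set $\Omega^+=\{v>u\}$, whose measure is small, one gets $\sup_{B_1}(v-u)\le\tfrac14+C_0\|g_\rho\|_{L^p}$, hence $u\ge\tfrac12-C_0\|g_\rho\|$ on $B_{1/2}$ where $v\ge3/4$. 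Only then does the annular barrier $w\sim|x|^{-s}$ enter, in the expansion direction: it is a subsolution on $B_2\setminus B_{1/2}$ that equals the already-established lower bound on $\partial B_{1/2}$ and vanishes on $\partial B_2$, so a second comparison/ABP step propagates the bound from $B_{1/2}$ to $B_1$. (The paper also splits into cases $a<4C_0\|g_\rho\|$ and $a\ge4C_0\|g_\rho\|$ to make the conclusion either trivial or to get a clean $\tilde u\ge1/4$ before the second barrier — a housekeeping step you should also include.) To repair your proof, replace the single blow-up barrier by either this two-step bowl-plus-annulus argument, or by a globally $C^2$ bump function of Caffarelli--Cabr\'e type (flat near the centre, power-type decay outside, with a controlled right-hand side supported near the centre); in either case the contradiction framing around a near-infimum point $y$ is unnecessary and can be dropped in favour of the direct comparison on $B_1$.
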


\begin{proof} Rescaling $x\to (x-x_1)/\rho$ we can assume that $u$ is a nonnegative solution of
\begin{equation}\label{extreq1}
\mathcal{M}_{\lambda,\Lambda}^-(D^2u) -b_\rho|Du|-c_\rho u\le g_\rho, \quad \mbox{in }\;B_2
\end{equation}
 where $b_\rho(x)=\rho b(x_1+\rho x)$,  $c_\rho(x)=\rho^2 c(x_1+\rho x)$, $g_\rho(x)=\rho^2 g(x_1+\rho x)$ and
 $\|b_\rho\|_{L^q(B_2)} = \rho^{1-n/q}\|b\|_{L^q(B_{2\rho}(x_1))}\le \rho^{1-n/q}$, $\|c_\rho\|_{L^p(B_2)} \le \rho^{2-n/p}$.
The choice of $\rho_0$ will be made so that these norms be sufficiently small.

Assume first $a=1$ and set $v(x) = 1-|x|^2$. We have $|\{u> 1\}\cap B_1|\ge (1-\delta) |B_1|$, hence in particular
$$
|\{v>u\}\cap B_1|\le \delta|B_1|.$$
We have in $B_1$
\begin{eqnarray*}
\mp(D^2(v-u)) + b_\rho|D(v-u)|-c_\rho(v-u)&\ge& \mm(D^2v) - b_\rho|Dv| - c_\rho v - g_\rho\\
&\ge & -C( 1+ b_\rho + c_\rho) - g_\rho,\end{eqnarray*}
and $v-u\le 0$ on $\partial B_1$, so by the ABP inequality (applied with $p$ substituted by $n$ in case $p>n$)
$$
\sup_{B_1} (v-u) \le C(|\Omega^+|^{1/p} + \|b_\rho\|_{L^q(B_2)} + \|c_\rho\|_{L^p(B_2)}) + C_0 \|g_\rho\|_{L^p(B_2)},
$$
where $|\Omega^+| = |\{v-u>0\}|\le \delta |B_1|$. By choosing $\delta$ and $\rho_0$ sufficiently small
we obtain
$$
\sup_{B_1} (v-u) \le \frac{1}{4} + C_0 \|g_\rho\|_{L^p(B_2)}.
$$
Since $v\ge 3/4$ in $B_{1/2}$ we get
$$
u\ge \frac{1}{2} - C_0 \|g_\rho\|_{L^p(B_2)}\quad \mbox{in }\;B_{1/2}.
$$
For arbitrary $a>0$,  
by replacing $u$ by $\tilde u = u/a$ we get
\begin{equation}\label{tildeu12}
\tilde u \ge \frac{1}{2} - \frac{C_0}{a} \|g_\rho\|_{L^p(B_2)}\quad \mbox{in }\;B_{1/2}.
\end{equation}
Now, if $a<4C_0\|g_\rho\|_{L^p(B_2)}$ the inequality \eqref{conclrho} trivially holds with the choice $\kappa = 1$, $\bar C= 4C_0$ (since its right-hand side is negative). If on the other hand $a\ge 4C_0\|g_\rho\|_{L^p(B_2)}$ we have $\tilde u \ge 1/4$ in $B_{1/2}$. We then take
$$
w(x)=\frac{1}{4}\:\displaystyle\frac{|x|^{-s}-2^{-s}}{(1/2)^{-s}
-2^{-s}}\,.
$$
where $s>0$ is such that $\mm(D^2(|x|^{-s}))=0$, that is, $\lambda (s+1)=\Lambda (N-1) $. As above
\begin{eqnarray*}\mp(D^2(w-\tilde u)) + b_\rho|D(w-\tilde u)|-c_\rho(w-\tilde u)&\ge& \mm(D^2w) - b_\rho|Dw| - c_\rho w - g_\rho/a\\
&\ge & -C( b_\rho + c_\rho) - g_\rho/a,\end{eqnarray*}  
 with $w-\tilde u\le 0$ on $\partial(B_2\setminus B_{1/2})$,
so by the ABP inequality for each $\ep>0$ there exists $\rho_0>0$ such that if $\rho\in (0,\rho_0]$
$$
\sup_{B_2\setminus B_{1/2}} (w-u) \le \ep - \frac{C_1}{a} \|g_\rho\|_{L^p(B_2)}
$$
Setting $\ep =\frac{1}{2}\min_{B_1\setminus B_{1/2}} w$ we obtain
$$
\tilde u\ge \frac{\ep}{2} - \frac{C_1}{a} \|g_\rho\|_{L^p(B_2)}\quad \mbox{in }\;B_1\setminus B_{1/2},
$$
Combining this with \eqref{tildeu12}, and choosing $\kappa=\frac14 \min(1,\ep)$, $\bar C=\max\{4C_0,C_1\}$, we deduce that
$$
u> \kappa a- \bar C\|g_\rho\|_{L^p(B_2)}\quad \mbox{in }\;B_1,
$$
which concludes the proof of Proposition \ref{growth}.\end{proof}

Next we prove \eqref{ineqrho0}, for the number $\rho_0$ given by Proposition \ref{growth}. By replacing $u$ by $(\inf_{B_{\rho_0}(x_0)} u +\alpha +  \|g\|_{L^{p}(B_{2\rho_0}(x_0))})^{-1} u$ we see that it is enough to assume that $\inf_{B_{\rho_0}(x_0)} u\le 1$, $\|g\|_{L^{p}(B_{2\rho_0}(x_0))}\le 1$, and prove that
\begin{equation}\label{meee}
\left(\int_{B_{\rho_0}(x_0)} u^\epsilon\,dx\right)^{1/\epsilon}\le C
\end{equation}
with constants independent of $\alpha>0$ (then let $\alpha\to0$). Recall $\rho_0\in(0,1/2)$ depends only on $n,p,q,\lambda,\Lambda$.


It follows from Proposition \ref{growth} that we can find a constant $ M>1$ such that
\begin{equation}\label{ineqrho1}
|\{u> M\}\cap B_{\rho_0} |\le (1-\delta) |B_{\rho_0} |,\quad B_{\rho_0} =B_{\rho_0}(x_0).
\end{equation}
Indeed, if \eqref{ineqrho1} failed, by Proposition \ref{growth} we would have, setting $M= 1+(1/\kappa)(\bar C+2)$, that
$u\ge \kappa M - \bar C\ge 2$ in $B_{\rho_0}$ which is a contradiction with $\inf_{B_{\rho_0}} u\le 1$. Note we also have $\kappa M^k - \bar C\ge 2 M^{k-1}$ , for each $k\ge1$.

 We now apply a well-known argument, to prove by induction that  for all $k\in \mathrm{N}$, $k\ge1$,
\begin{equation}\label{ineqrho2}
|\{u> M^k\}\cap B_{\rho_0} |\le (1-c(n)\delta)^k |B_{\rho_0}|,
\end{equation}
for some (small) constant $c(n)>0$. Specifically, we use the Krylov-Safonov "propagating ink-spots lemma" (\cite[Lemma 1.1]{Saf}), in the form given for instance in \cite[Lemma 2.1]{IS}:
\begin{lem}
Let $E\subset F \subset B_{\rho_0}$ be open sets. Assume for some $\delta>0$ we have $|E|\leq (1-\delta) |B_{\rho_0}|$, and for any ball  $ B\subset B_{\rho_0}$, if $|B\cap E|>(1-\delta)|B|$ then $ B\subset F$. Then $|E|\leq (1-c\delta)|F|$,
for some constant $c=c(n)>0$.
\end{lem}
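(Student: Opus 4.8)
The plan is to prove the lemma by the Calder\'on--Zygmund/Vitali covering argument underlying the classical Krylov--Safonov ``ink-spots'' lemma; we describe the Vitali form.

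Since $E$ is open, every $x\in E$ is a Lebesgue density point of $E$. For $x\in E$ put $\phi_x(r):=|B_r(x)\cap E|/|B_r(x)|$ and $d_x:=\mathrm{dist}(x,\partial B_{\rho_0})$; the map $\phi_x$ is continuous and $\phi_x(r)\to 1$ as $r\to 0^+$. Starting from small radii (where $\phi_x=1$) and inflating, let $\rho_x\in(0,d_x]$ be the first radius at which either $\phi_x(\rho_x)=1-\delta$, or $\rho_x=d_x$ (the ball touches $\partial B_{\rho_0}$), and set $B^x:=B_{\rho_x}(x)$. For every $r<\rho_x$ one has $B_r(x)\subset B_{\rho_0}$ and $\phi_x(r)>1-\delta$, so the hypothesis of the lemma gives $B_r(x)\subset F$; since $F$ is open and $B^x=\bigcup_{r<\rho_x}B_r(x)$, we get $B^x\subset F$. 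Moreover, if the inflation was stopped by the density reaching $1-\delta$, then $|B^x\setminus E|=\delta\,|B^x|$.

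By the $5r$-covering lemma applied to the family $\{B^x:x\in E\}$ (whose radii are $\le\rho_0$), there is a countable disjoint subfamily $\{B^{x_j}\}$ with $E\subset\bigcup_j 5B^{x_j}$. Suppose first that every selected $B^{x_j}$ is of the ``density-deficit'' type. Then, using $B^{x_j}\subset F$, the disjointness of the $B^{x_j}$, and $E\subset F$,
\[
|E|\le\sum_j|5B^{x_j}|=5^n\sum_j|B^{x_j}|=\frac{5^n}{\delta}\sum_j|B^{x_j}\setminus E|\le\frac{5^n}{\delta}\,|F\setminus E|=\frac{5^n}{\delta}\bigl(|F|-|E|\bigr),
\]
and rearranging gives $|E|\le\dfrac{5^n}{5^n+\delta}\,|F|\le\bigl(1-c(n)\,\delta\bigr)|F|$ with, say, $c(n)=\tfrac12\,5^{-n}$, which is the claimed inequality.

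The remaining, and main, difficulty is the balls $B^{x_j}$ whose inflation was stopped by $\partial B_{\rho_0}$ rather than by the density dropping to $1-\delta$: such a ball may be almost entirely filled by $E$, so it contributes little to $|F\setminus E|$, and a priori the total volume of these balls is comparable to $|F|$, which would destroy the estimate above. I would control them by showing they come from a small region. The key geometric point is that a ball of radius $r>d_x$ centered at distance $d_x$ from the (convex) sphere $\partial B_{\rho_0}$ pokes out of $B_{\rho_0}$ across a spherical cap of volume $\gtrsim_n \bigl((r-d_x)/r\bigr)^{(n+1)/2}|B_r|$; hence, once $\delta$ is small, $\phi_x$ has already dropped below $1-\delta$ at radius $2d_x$, so the first radius $\rho_x^{*}\ge d_x$ with $\phi_x(\rho_x^{*})=1-\delta$ satisfies $\rho_x^{*}\le 2d_x$ and confines the enlarged stopping ball to a fixed multiple of $B_{d_x}(x)\subset F$. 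Reconciling the ball-based hypothesis with a decomposition that does not lose mass near $\partial B_{\rho_0}$ — keeping separate, controlled track of these caps, or equivalently running the classical dyadic ``ink-spots'' combinatorics of Krylov and Safonov (cf.\ \cite[Lemma~1.1]{Saf}), for which the ambient set is a union of cubes and no boundary cutoff occurs — is, I expect, the only genuinely technical step; with it in place, summation exactly as in the generic case yields $|E|\le(1-c\delta)|F|$.
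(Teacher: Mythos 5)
The paper does not prove this lemma---it quotes it from Safonov and Imbert--Silvestre---so there is no internal proof to compare against, and a Vitali covering is indeed a legitimate route to the ball version. Your handling of the density-deficit stopping balls is correct: each such $B^{x_j}\subset F$ satisfies $|B^{x_j}\setminus E|=\delta|B^{x_j}|$, the $B^{x_j}$ are disjoint, so $\sum_j|B^{x_j}\setminus E|\le|F|-|E|$, and the $5^n$ dilation closes the estimate exactly as you write.

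The boundary-stopped balls, however, are a real gap rather than a ``technical step,'' and your sketch does not close it. Inflating past $d_x$ to the first radius $\rho_x^*\le 2d_x$ with $\phi_x(\rho_x^*)=1-\delta$ does not feed the summation, because $B_{\rho_x^*}(x)$ sticks out of $B_{\rho_0}$: the hypothesis of the lemma cannot be applied to it, and you only know $B_{d_x}(x)\subset F$, not $B_{\rho_x^*}(x)\subset F$ (nor $2B_{d_x}(x)\subset F$). Without the latter, the inequality $\sum_j|B^{x_j}\setminus E|\le|F\setminus E|$ fails for these terms, so you cannot ``sum exactly as in the generic case.'' A clean repair is to move the center inward rather than inflate outward: let $p\in\partial B_{\rho_0}$ be the nearest boundary point to $x$, set $z_r:=p(1-r/\rho_0)$ for $r\in[d_x,\rho_0]$, and consider the nested family $B_r(z_r)$, all internally tangent to $\partial B_{\rho_0}$ at $p$ and hence contained in $B_{\rho_0}$, interpolating between $B_{d_x}(x)$ (at $r=d_x$) and $B_{\rho_0}$ itself (at $r=\rho_0$). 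The density of $E$ in $B_{d_x}(x)$ exceeds $1-\delta$, while $|E|\le(1-\delta)|B_{\rho_0}|$ forces density $\le 1-\delta$ at $r=\rho_0$, so by continuity and monotone convergence there is a first $r^*\in(d_x,\rho_0]$ at which the density equals $1-\delta$. For $d_x\le r<r^*$ the hypothesis applies and gives $B_r(z_r)\subset F$ (and $B_r(z_r)\subset B_{d_x}(x)\subset F$ for $r<d_x$), whence $\tilde B^x:=B_{r^*}(z_{r^*})=\bigcup_{r<r^*}B_r(z_r)\subset F$, with $x\in B_{d_x}(x)\subset\tilde B^x$ and $|\tilde B^x\setminus E|=\delta|\tilde B^x|$. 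Replacing $B^x$ by $\tilde B^x$ at boundary points, every stopping ball now lies in $F$ and has density defect exactly $\delta$; the Vitali lemma applies to arbitrary families of balls with bounded radii (not only balls centered at points of $E$), so your summation then goes through verbatim. (A minor further point: your cap estimate tacitly needs $\delta$ below a dimensional threshold $\delta_0(n)$; this is harmless, since for $\delta_0<\delta\le 1$ the hypotheses at level $\delta$ imply those at level $\delta_0$, and the conclusion at level $\delta_0$ gives $|E|\le(1-c\delta_0)|F|\le(1-c\delta_0\,\delta)|F|$.)
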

The induction proceeds by setting $E=\{u> M^{k}\}\cap B_{\rho_0}$, $F=\{u> M^{k-1}\}\cap B_{\rho_0}$, $k\ge1$. The condition of the lemma is guaranteed by Proposition \ref{growth} and the choice of $M$ we made.

Then by \eqref{ineqrho2} there exists $\varepsilon^\prime>0$ such that
$
|\{u\geq t\}\cap B_{\rho_0} |\leq
C\min\{1,t^{-\varepsilon^\prime}\}$, $  t>0$.
Indeed, if $t\leq 1$ set $C=|B_{\rho_0}|$. If $t>1$ let $j\in \mathrm{N}$ such that $M^{j-1} \le t<M^j$, so
$$
|\{u\geq t\}\cap B_{\rho_0} |\leq (1-c\delta)^{j-1}|B_{\rho_0}|=M^{-\varepsilon^\prime (j-1)}|B_{\rho_0}|\leq M^{\varepsilon^\prime} |B_{\rho_0}|  t^{-\varepsilon^\prime}= \frac{|B_{\rho_0}|}{1-c\delta} t^{-\varepsilon^\prime},
$$
if we choose $\varepsilon^\prime$ so that  $M^{-\varepsilon^\prime}=1-c\delta$. Now, take $\epsilon=\varepsilon^\prime/2$, then
$$
\int_{B_{\rho_0}} u^{\epsilon} = \frac{\varepsilon^\prime}{2} \int_0^{\infty} t^{\frac{\varepsilon^\prime}{2}-1} |\{ u\geq t\}\cap B_{\rho_0}| \, dt
\leq C \int_0^{\infty} t^{\frac{\varepsilon^\prime}{2}-1} \min \{ 1, t^{-\varepsilon^\prime} \} \, dt=C.
$$
\medskip

In the end we prove the local maximum principle (LMP) in Theorem A. We have $\mp(D^2u) + b|Du| \ge -cu+f $ so by the already known LMP (see \cite{KSl}), setting $p_1=(p+p_0)/2>p_0$,
\begin{eqnarray*}\sup_{B_1} u&\le&  C\left(\left(\int_{{ B_{5/4}}} |u|^\varepsilon\,dx\right)^{1/\varepsilon} + \|cu\|_{L^{p_1}( B_{4/3})}+ \|f\|_{L^{p_1}( B_{4/3})}\right)\\
&\le&   C\left(\left(\int_{{B_{5/4}}} |u|^\varepsilon\,dx\right)^{1/\varepsilon} + \|u\|_{L^{A}(B_{4/3})}+ \|f\|_{L^{p}(B_{4/3})}\right),\end{eqnarray*}
where $1/A = 1/p_1 - 1/p$ is given by the H\"older inequality (recall $\|c\|_{L^{p}(B_{2})}=1$). Finally, we downgrade the $L^A$-norm of $u$ on ${B_{4/3}}$ to a ``$L^\varepsilon$-norm" on  $B_{3/2}$ through a well-known analysis argument on an expanding sequence of balls (given for instance on pages 74-76 of \cite{HL}).

\end{document}